\DeclareSIUnit\ft{ft}
\newtheorem{theorem}{Theorem}
\newtheorem{proposition}{Proposition}
\newtheorem{lemma}{Lemma}
\theoremstyle{remark}
\newtheorem{remark}{Remark}
\newcommand{\btheta}{\boldsymbol{\theta}}
\newcommand{\R}{\mathbb{R}}
\newcommand{\defeq}{\vcentcolon=}
\DeclareMathOperator*{\argmin}{argmin}
\begin{document}

\title{Parameterized Differential Dynamic Programming}




%
\author{
   \authorblockN{
      Alex Oshin\authorrefmark{1}\authorrefmark{2}\authorrefmark{3},
      Matthew D. Houghton\authorrefmark{2},
      Michael J. Acheson\authorrefmark{2},
      Irene M. Gregory\authorrefmark{2} and
      Evangelos A. Theodorou\authorrefmark{1}
   }
   \authorblockA{
      \authorrefmark{1}School of Aerospace Engineering, Georgia Institute of Technology, Atlanta, GA
   }
   \authorblockA{
      \authorrefmark{2}NASA Langley Research Center, Hampton, VA
   }
   \authorblockA{
      \authorrefmark{3}Correspondence to: alexoshin@gatech.edu
   }
}

\maketitle

\begin{abstract}
Differential Dynamic Programming (DDP) is an efficient trajectory optimization algorithm relying on second-order approximations of a system's dynamics and cost function, and has recently been applied to optimize systems with time-invariant parameters. Prior works include system parameter estimation and identifying the optimal switching time between modes of hybrid dynamical systems. This paper generalizes previous work by proposing a general parameterized optimal control objective and deriving a parametric version of DDP, titled Parameterized Differential Dynamic Programming (PDDP). A rigorous convergence analysis of the algorithm is provided, and PDDP is shown to converge to a minimum of the cost regardless of initialization. The effects of varying the optimization to more effectively escape local minima are analyzed. Experiments are presented applying PDDP on multiple robotics systems to solve model predictive control (MPC) and moving horizon estimation (MHE) tasks simultaneously. Finally, PDDP is used to determine the optimal transition point between flight regimes of a complex urban air mobility (UAM) class vehicle exhibiting multiple phases of flight.
\end{abstract}

\IEEEpeerreviewmaketitle

\section{Introduction}
\label{sec/introduction}




Classically, optimal control research has focused on the study of methods for trajectory optimization of underactuated nonlinear systems.
This attention is due to the fact that underactuated systems are more difficult to control due to having more degrees of freedom relative to the number of controls available \cite{spong1998underactuated}.
However, the emerging \acf{UAM} sector allows the opportunity to study the application of optimal control methods to overactuated aircraft whose complexity appears in the nonlinear transition dynamics between flight phases \cite{gregory2021intelligent,gregory2021urban}.
These \ac{UAM} class vehicles commonly exhibit three phases of flight --- including \acf{VTOL}, fixed-wing cruise, and a complex transition phase between the two --- and can thus be classified as hybrid systems \cite{branicky1998multiple,ezzine1989controllability,hedlund1999optimal}, transitioning between multiple modes of the dynamics during a flight.
While many algorithms have been developed for hybrid systems trajectory optimization in the past, most adopt a linear or reduced-order model, with the focus on bipedal or quadrupedal robotics systems, e.g. \cite{orin2013centroidal,apgar2018fast,di2018dynamic}.
The lack of applications to \ac{UAM} class vehicles reveals an opportunity to develop hybrid systems optimization techniques for unique aircraft dynamics which have not been previously studied.

Past research has shown \acf{DDP} is an effective algorithm for planning in high-dimensional state spaces \cite{tassa2014control}, and \ac{DDP} has shown recent success planning trajectories for \ac{UAM} vehicles \cite{houghton2022path}.
\ac{DDP} is a shooting method that achieves computational efficiency using second-order approximations along a nominal trajectory and admits quadratic convergence properties under mild assumptions \cite{mayne1966second,jacobson1970differential}.
\ac{DDP} is beneficial in that it requires no reduction in the complexity of the dynamics model, and its convergence properties make it a solid candidate for solving realtime \ac{MPC} tasks \cite{tassa2012synthesis}.

\begin{figure}[t]
	\centering
	\includegraphics[scale=1.0]{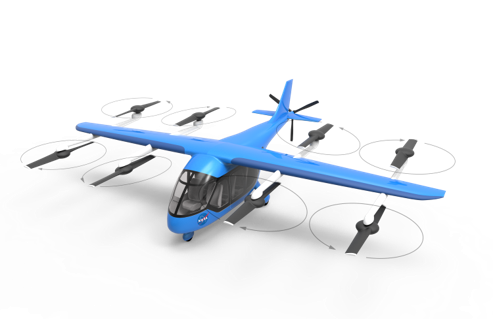}
	\caption{Rendering of the NASA Lift+Cruise aircraft.}
	\label{fig/lpc}
\end{figure}

Recent work has shown the value function derivatives produced by \ac{DDP} can be used to update the initial conditions of the nominal trajectory, which allows optimization over time-invariant parameters \cite{kobilarov2015differential,li2020hybrid}.
While this update corresponds to a Newton step on the parameters, modifying the initial condition of \ac{DDP} significantly affects the convergence of the algorithm since the change induces a large shift of the nominal trajectory in the state space.
To remedy this issue, this work introduces a general parameterized optimal control objective.
Explicitly introducing the parameters into the system dynamics and cost function allows a second-order algorithm to be derived for iteratively updating both the controls and parameters simultaneously, independent of the form of the parameterization. This parameterized version of \ac{DDP} is referred to as \ac{PDDP}, and a rigorous convergence analysis is provided showing the derived algorithm converges to a minimum of the cost regardless of state or parameter initialization.
Further, the effects of the simultaneous optimization of parameters and controls is studied.
An optimization scheme is proposed that more effectively escapes local minima, which is a common issue of local methods such as \ac{DDP}.
The \ac{PDDP} algorithm is applied to two important robotics tasks, model parameter estimation and hybrid systems optimization, and is used to identify the optimal transition points between flight regimes for the NASA Lift+Cruise \ac{UAM} class vehicle developed under NASA's \acf{RVLT} project \cite{silva2018vtol}.
A rendering of the Lift+Cruise aircraft is provided in \cref{fig/lpc}.

The contributions of this work are as follows:
\begin{enumerate}
    \item Generalize previous work by deriving a parameterized form of \ac{DDP}, referred to as \ac{PDDP}.
    \item Provide theoretical analysis of the convergence behavior of the proposed \ac{PDDP} algorithm and show it is globally convergent to a minimum of the cost for optimization problems with arbitrary dynamics and cost function parameters.
    \item Discuss and analyze three optimization choices for iteratively solving for the optimal controls and parameters.
    \item Apply the proposed method on multiple robotics systems, including a cartpole, a quadrotor, and an ant quadruped system using the open-source physics engine Brax \cite{brax2021github}, and show \ac{PDDP} is able to solve adaptive \ac{MPC} tasks using \ac{MHE}.
    \item Apply \ac{PDDP} to find the optimal transition point between multi-modal dynamical systems, including a \ac{UAM} class vehicle exhibiting multiple phases of flight.
\end{enumerate}

The paper is organized as follows. \cref{sec/pddp} derives the \ac{PDDP} algorithm and proves the convergence of the method. \cref{sec/applications} discusses applications of \ac{PDDP} to system parameter estimation and switching time optimization for hybrid systems. \cref{sec/experiments} analyzes experimental results applying \ac{PDDP} to the previously discussed tasks. The paper concludes with \cref{sec/conclusion} by discussing future work.

\section{Parameterized Differential Dynamic Programming}
\label{sec/pddp}

In this section, the parameterized version of \ac{DDP}
is derived.
While the similarities are highlighted here, a full derivation of standard \ac{DDP} can be found in recent works such as \cite{tassa2014control}.

\subsection{Problem Formulation}
This work considers parameterized discrete-time dynamics that evolve according to
\begin{align}
    \mathbf{x}_{t + 1} & = \mathbf{F}(\mathbf{x}_t, \mathbf{u}_t; \btheta) \label{eq/parameterized_dynamics}
,\end{align}
where $\mathbf{F}: \R^{n_x} \times \R^{n_u} \times \R^{n_\theta} \to \R^{n_x}$ with $\mathbf{x}_t \in \R^{n_x}$, $\mathbf{u}_t \in \R^{n_u}$, and $\btheta \in \R^{n_\theta}$, and $\mathbf{x}_1 \in \R^{n_x}$ denoting the initial condition. The semicolon emphasizes that the parameters are time-invariant and separate from states or controls that are time-varying.
The same assumptions made in standard \ac{DDP}~\cite{jacobson1970differential} are adopted here. Namely, the dynamics are assumed to be twice differentiable with respect to state and control, with the additional assumption that the dynamics are twice differentiable with respect to the parameters.

Defining the control sequence $\mathbf{U} \defeq \{\mathbf{u}_1, \ldots, \mathbf{u}_T\}$, the parameterized discrete-time optimal control problem is given by
\begin{equation} \label{eq/parameterized_cost_fn}
    \min_{\mathbf{U}, \btheta} \mathcal{J}(\mathbf{U}; \btheta) = \min_{\btheta} \min_{\mathbf{U}} \sum_{t = 1}^{T} \mathcal{L}(\mathbf{x}_t, \mathbf{u}_t; \btheta) + \phi(\mathbf{x}_{T + 1}; \btheta)
,\end{equation}
where $\mathcal{L}$ and $\phi$ are the twice differentiable running cost and terminal cost function, respectively, and $T$ is the time horizon.
In the most general case, the parameters $\btheta$ affect both the dynamics and the cost.

The inner objective of \cref{eq/parameterized_cost_fn} with $\btheta$ fixed is equivalent to the standard \ac{DDP} optimal control problem. The value function, now parameterized by $\btheta$, is given recursively by
\begin{align}
    V(\mathbf{x}_t; \btheta) & = \min_{\mathbf{u}_t} \Big[ \underbrace{\mathcal{L}(\mathbf{x}_t, \mathbf{u}_t; \btheta) + V(\mathbf{x}_{t + 1}; \btheta)}_{\defeq Q(\mathbf{x}_t, \mathbf{u}_t; \btheta)} \Big] \label{eq/parameterized_value_fn}
,\end{align}
with $V(\mathbf{x}_{T + 1}; \btheta) = \phi(\mathbf{x}_{T + 1}; \btheta)$. Thus, the optimal $\btheta$ is given at the initial time $t = 1$ by
\begin{align}
    \btheta^* & = \argmin_{\btheta} V(\mathbf{x}_1; \btheta) \label{eq/theta_star}
.\end{align}

\subsection{Algorithm Derivation}
\label{subsec/alg_derivation}
Given the parameterized dynamics defined by \cref{eq/parameterized_dynamics}, this subsection will derive an iterative algorithm for finding the control trajectory $\mathbf{U}$ and parameters $\btheta$ that minimize the cost function defined in \cref{eq/parameterized_cost_fn}.

As in standard \ac{DDP}, the parameterized optimal control problem is solved by considering quadratic approximations of the value function along a nominal trajectory $\bar{\mathbf{x}}_t$, $\bar{\mathbf{u}}_t$. However, this derivation also includes terms expanding the value function about a set of nominal parameters $\bar{\btheta}$. Let $\delta \mathbf{x}_t, \delta \mathbf{u}_t, \delta \btheta$ be the variation in state, control, and parameters, respectively, so that
\begin{align}
    \mathbf{x}_t \defeq \bar{\mathbf{x}}_t + \delta \mathbf{x}_t , && \mathbf{u}_t \defeq \bar{\mathbf{u}}_t + \delta \mathbf{u}_t , && \btheta \defeq \bar{\btheta} + \delta \btheta
.\end{align}

The quadratic expansion of the value function about $\bar{\mathbf{x}}_t, \bar{\btheta}$ has the form
\begin{align}
\setlength\arraycolsep{2pt}
    \begin{aligned}
        V(\mathbf{x}_t; \btheta) \approx {}& V^0_t + (V^x_t)^\top \delta \mathbf{x}_t + (V^\theta_t)^\top \delta \btheta \\
        & + \frac{1}{2} \begin{bmatrix}
            \delta \mathbf{x}_t \\ \delta \btheta
        \end{bmatrix}^\top \begin{bmatrix}
            V^{xx}_t & V^{x\theta}_t \\
            V^{\theta x}_t & V^{\theta\theta}_t
        \end{bmatrix} \begin{bmatrix}
            \delta \mathbf{x}_t \\ \delta \btheta
        \end{bmatrix}
    ,\end{aligned} \label{eq/parameterized_value_fn_approx}
\end{align}
where the partial derivatives of $V$ are denoted using superscripts and are evaluated at $\bar{\mathbf{x}}_t, \bar{\btheta}$, e.g., $V^0_t = V(\bar{\mathbf{x}}_t; \bar{\btheta})$, $V^x_t = \nabla_x V(\bar{\mathbf{x}}_t; \bar{\btheta})$, etc. This superscript notation will be adopted throughout the paper. In order to solve for the partial derivatives of $V$, the quadratic expansion of the $Q$ function defined in \cref{eq/parameterized_value_fn} is taken about $\bar{\mathbf{x}}_t$, $\bar{\mathbf{u}}_t, \bar{\btheta}$:
\begin{align}
\setlength\arraycolsep{2pt}
    \hspace{-1.0em}\begin{aligned}
        Q(\mathbf{x}_t, \mathbf{u}_t; \btheta) \approx {}& Q^0_t + (Q^x_t)^\top \delta \mathbf{x}_t + (Q^u_t)^\top \delta \mathbf{u}_t + (Q^\theta_t)^\top \delta \btheta \\
        & + \frac{1}{2} \begin{bmatrix}
            \delta \mathbf{x}_t \\ \delta \mathbf{u}_t \\ \delta \btheta
        \end{bmatrix}^\top \begin{bmatrix}
            Q^{xx}_t & Q^{xu}_t & Q^{x \theta}_t \\
            Q^{ux}_t & Q^{uu}_t & Q^{u \theta}_t \\
            Q^{\theta x}_t & Q^{\theta u}_t & Q^{\theta \theta}_t
        \end{bmatrix} \begin{bmatrix}
            \delta \mathbf{x}_t \\ \delta \mathbf{u}_t \\ \delta \btheta
        \end{bmatrix}
    ,\end{aligned} \label{eq/parameterized_Q_approximation}
\end{align}
where
\begin{equation} \label{eq/parameterized_Q_derivatives}
\begin{split}
    Q^0_t & = \mathcal{L}^0_t + V^0_{t + 1} ,\\
    Q^x_t & = \mathcal{L}^x_t + (\mathbf{F}^x_t)^\top V^x_{t + 1} ,\\
    Q^u_t & = \mathcal{L}^u_t + (\mathbf{F}^u_t)^\top V^x_{t + 1} ,\\
    Q^\theta_t & = \mathcal{L}^\theta_t + V^\theta_{t + 1} + (\mathbf{F}^\theta_t)^\top V^x_{t + 1} ,\\
    Q^{xx}_t & = \mathcal{L}^{xx}_t + (\mathbf{F}^x_t)^\top V^{xx}_{t + 1} \mathbf{F}^x_t ,\\
    Q^{xu}_t & = \mathcal{L}^{xu}_t + (\mathbf{F}^x_t)^\top V^{xx}_{t + 1} \mathbf{F}^u_t = (Q^{ux}_t)^\top ,\\
    Q^{x\theta}_t & = \mathcal{L}^{x\theta}_t + (\mathbf{F}^x_t)^\top V^{x\theta}_{t + 1} + (\mathbf{F}^x_t)^\top V^{xx}_{t + 1} \mathbf{F}^\theta_t = (Q^{\theta x}_t)^\top ,\\
    Q^{uu}_t & = \mathcal{L}^{uu}_t + (\mathbf{F}^u_t)^\top V^{xx}_{t + 1} \mathbf{F}^u_t ,\\
    Q^{u\theta}_t & = \mathcal{L}^{u\theta}_t + (\mathbf{F}^u_t)^\top V^{x\theta}_{t + 1} + (\mathbf{F}^u_t)^\top V^{xx}_{t + 1} \mathbf{F}^\theta_t = (Q^{\theta u}_t)^\top ,\\
    Q^{\theta\theta}_t & = \mathcal{L}^{\theta\theta}_t + V^{\theta\theta}_{t + 1} + 2 (\mathbf{F}^\theta_t)^\top V^{x\theta}_{t + 1} + (\mathbf{F}^\theta_t)^\top V^{xx}_{t + 1} \mathbf{F}^\theta_t
.\end{split}
\end{equation}
Note the partial derivatives with respect to $\btheta$ include extra terms because both the value function and the dynamics depend on $\btheta$. In \cref{eq/parameterized_Q_derivatives}, the second-order dynamics terms have been dropped following the \ac{iLQR} algorithm \cite{li2004iterative}. A full derivation with all second-order terms is given in \cref{proof/full_Q_derivatives}.

Substituting this quadratic approximation into \cref{eq/parameterized_value_fn} and dropping the terms not dependent on $\delta \mathbf{u}_t$ gives
\begin{align}
    \begin{aligned}
    V(\mathbf{x}_t; \btheta) = \min_{\delta \mathbf{u}_t} \Big[ & (Q^u_t)^\top \delta \mathbf{u}_t + \delta \mathbf{x}_t^\top Q^{xu}_t \delta \mathbf{u}_t \\
    & + \delta \btheta^\top Q^{\theta u}_t \delta \mathbf{u}_t + \frac{1}{2} \delta \mathbf{u}_t^\top Q^{uu}_t \delta \mathbf{u}_t \Big]
    .\end{aligned}
\end{align}
Taking the gradient of the argument in the minimization and setting it equal to zero yields the optimal control update
\begin{align} \label{eq/control_update_unscaled}
    \delta \mathbf{u}_t^* & = \mathbf{k}_t + \mathbf{K}_t \delta \mathbf{x}_t + \mathbf{M}_t \delta \btheta
\end{align}
with
\begin{equation} \label{eq/parameterized_gains}
    \begin{aligned}
        \mathbf{k}_t & \defeq -(Q^{uu}_t)^{-1} Q^u_t ,\\
        \mathbf{K}_t & \defeq -(Q^{uu}_t)^{-1} Q^{ux}_t ,\\
        \mathbf{M}_t & \defeq -(Q^{uu}_t)^{-1} Q^{u\theta}_t .
    \end{aligned}
\end{equation}

Note the gains $\mathbf{k}_t$ and $\mathbf{K}_t$ match the standard \ac{DDP} gains, with an additional feedback gain $\mathbf{M}_t$ on $\delta \btheta$ providing a correction because the algorithm is optimizing over $\delta \mathbf{u}_t$ and $\delta \btheta$ simultaneously. Taking $\delta \btheta = \mathbf{0}$ in \cref{eq/control_update_unscaled} means the parameters are held constant which yields the usual \ac{DDP} control update.

The optimal $\btheta$ minimizes \cref{eq/parameterized_value_fn} at time $t = 1$. Since the initial condition is fixed, $\mathbf{x}_1 = \bar{\mathbf{x}}_1$, which implies $\delta \mathbf{x}_1 = \mathbf{0}$. Substituting in the quadratic approximation of $Q$ into \cref{eq/theta_star} and dropping the terms not dependent on $\delta \btheta$, yields
\begin{align}
    \delta \btheta^* & = \argmin_{\delta \btheta} \Big[ (Q^\theta_1)^\top \delta \btheta + \delta \mathbf{u}_1^\top Q^{u\theta}_1 \delta\btheta + \frac{1}{2} \delta\btheta^\top Q^{\theta\theta}_1 \delta\btheta \Big] \nonumber \\
    & = - (Q^{\theta\theta}_1)^{-1} Q^\theta_1 - (Q^{\theta\theta}_1)^{-1} Q^{\theta u}_1 \delta \mathbf{u}_1
.\end{align}
At the initial time $t = 1$, there are two equations and two unknowns $\delta \btheta^*$ and $\delta \mathbf{u}_1^*$. Substituting in $\delta \mathbf{u}_1^*$ from \cref{eq/control_update_unscaled} results in the parameter update
\begin{align}
    \begin{aligned}
        \delta \btheta^* = \mathbf{m} \defeq -(Q^{\theta\theta}_1 - Q^{\theta u}_1 (Q^{uu}_1)^{-1} Q^{u\theta}_1)^{-1} \\
        (Q^{\theta}_1 - Q^{\theta u}_1 (Q^{uu}_1)^{-1} Q^u_1)
    .\end{aligned} \label{eq/parameter_update_gain}
\end{align}
When the feedforward gains $\mathbf{k}_t$ of \cref{eq/control_update_unscaled} and $\mathbf{m}$ of \cref{eq/parameter_update_gain} are too aggressive, the state trajectory may stray from the region where the quadratic approximation is accurate, and the cost may not decrease. To ensure convergence, the feedforward gains are scaled by the parameter $0 < \epsilon \leq 1$, such that
\begin{align}
    \delta \mathbf{u}_t^* & = \epsilon \mathbf{k}_t + \mathbf{K}_t \delta \mathbf{x}_t + \mathbf{M}_t \delta \btheta^* , \label{eq/optimal_control_update} \\
    \delta \btheta^* & = \epsilon \mathbf{m} . \label{eq/scaled_optimal_parameter_update}
\end{align}
The parameter $\epsilon$ is determined using line search and ensures the optimizer makes sufficient progress towards the minimum at each iteration, i.e. ensuring the update step satisfies Armijo's condition or the Wolfe conditions \cite{boyd2004convex}. Further discussion is given in \cref{subsec/convergence}.

What remains is to solve the value function derivatives by substituting in the expression for $\delta \mathbf{u}_t^*$. The expression for $\delta\btheta^*$ is not substituted back in, otherwise the terms in $\delta \btheta^*$ are incorporated into the zero-order value function term and the derivatives with respect to $\btheta$ go to zero. This means that the information for $\btheta$ cannot be propagated backwards in time.
In \cref{subsec/convergence}, to prove the cost reduction achievable, the full expression for $\delta\btheta^*$ is substituted into \cref{eq/parameterized_Q_approximation}.

Substituting \cref{eq/optimal_control_update} into \cref{eq/parameterized_Q_approximation} and equating like powers of the value function expansion in \cref{eq/parameterized_value_fn_approx} gives the following expressions for the value function derivatives:
\begin{equation} \label{eq/parameterized_value_fn_derivatives}
    \begin{split}
        V^0_t & = Q^0_t + (\frac{1}{2} \epsilon^2 - \epsilon) (Q^u_t)^\top (Q^{uu}_t)^{-1} Q^u_t ,\\
        V^x_t & = Q^x_t - Q^{xu}_t (Q^{uu}_t)^{-1} Q^u_t ,\\
        V^\theta_t & = Q^\theta_t - Q^{\theta u}_t (Q^{uu}_t)^{-1} Q^u_t ,\\
        V^{xx}_t & = Q^{xx}_t - Q^{xu}_t (Q^{uu}_t)^{-1} Q^{ux}_t ,\\
        V^{x\theta}_t & = Q^{x\theta}_t - Q^{xu}_t (Q^{uu}_t)^{-1} Q^{u\theta}_t = (V^{\theta x}_t)^\top ,\\
        V^{\theta\theta}_t & = Q^{\theta\theta}_t - Q^{\theta u}_t (Q^{uu}_t)^{-1} Q^{u \theta}_t .
    \end{split}
\end{equation}

\cref{eq/parameterized_value_fn_derivatives} and \cref{eq/parameterized_Q_derivatives} provide the equations for the backward pass of \ac{PDDP}, with boundary conditions $V^0_{T + 1} = \phi(\bar{\mathbf{x}}_{T + 1}; \bar{\btheta})$, $V^x_{T + 1} = \nabla_x \phi(\bar{\mathbf{x}}_{T + 1}; \bar{\btheta})$, etc. The derivatives of $V$ and $Q$ are solved for backwards in time starting from $t = T + 1$ down to the initial time $t = 1$ along the nominal trajectory. The forward pass of the algorithm consists of using the control updates from \cref{eq/optimal_control_update} and parameter updates from \cref{eq/scaled_optimal_parameter_update} to compute the new state trajectory starting from the initial condition $\mathbf{x}_1$. This yields an updated nominal trajectory, and this process can be repeated until some convergence criteria is met. Discussion on regularization of the value function derivatives as well as methods for updating the controls and parameters is given in \cref{subsec/alg_design} once the convergence of the method is proven.

\subsection{Convergence Analysis}
\label{subsec/convergence}

This section provides a mathematically rigorous convergence analysis of the proposed \ac{PDDP} algorithm. The main result is summarized in \cref{thm/pddp_converges}, which shows \ac{PDDP} is globally convergent to a minimum of the cost function. The same assumptions as in \cite{jacobson1970differential} for classic \ac{DDP} are made in this work, including the differentiability of the dynamics and cost function up to second-order.

The following proposition establishes the optimality of the parameter update step by drawing comparisons to Newton's method.

\begin{proposition} \label{prop/parameters_are_newton_step}
    The parameter update $\delta \btheta^*$ is a (damped) Newton step towards the minimum of the value function.
\end{proposition}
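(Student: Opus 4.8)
The plan is to show that the update $\delta\btheta^*$ in \cref{eq/parameter_update_gain} coincides with the (damped) Newton step for the reduced objective $g(\btheta) \defeq V(\mathbf{x}_1; \btheta)$ whose minimizer is $\btheta^*$ by \cref{eq/theta_star}. Concretely, I would establish that the bracketed matrix $Q^{\theta\theta}_1 - Q^{\theta u}_1 (Q^{uu}_1)^{-1} Q^{u\theta}_1$ is exactly $\nabla^2_\theta V(\bar{\mathbf{x}}_1; \bar{\btheta})$ and the bracketed vector $Q^{\theta}_1 - Q^{\theta u}_1 (Q^{uu}_1)^{-1} Q^u_1$ is exactly $\nabla_\theta V(\bar{\mathbf{x}}_1; \bar{\btheta})$, so that $\mathbf{m} = -\big(\nabla^2_\theta V_1\big)^{-1}\nabla_\theta V_1$ is the pure Newton step, and the scaling by $\epsilon$ in \cref{eq/scaled_optimal_parameter_update} makes it a damped Newton step.

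The key steps, in order, are: (i) recall that at $t=1$ the initial condition is fixed so $\delta\mathbf{x}_1 = \mathbf{0}$, which reduces the quadratic model of $Q_1$ in \cref{eq/parameterized_Q_approximation} to a function of $\delta\mathbf{u}_1$ and $\delta\btheta$ only; (ii) for each fixed $\delta\btheta$, minimize this quadratic over $\delta\mathbf{u}_1$ — since $V(\mathbf{x}_1;\btheta) = \min_{\mathbf{u}_1} Q(\mathbf{x}_1,\mathbf{u}_1;\btheta)$ and the inner minimization of a jointly quadratic function in $(\delta\mathbf{u}_1,\delta\btheta)$ over $\delta\mathbf{u}_1$ yields a quadratic in $\delta\btheta$ (a partial/Schur-complement minimization); (iii) read off the coefficients of this resulting quadratic: the linear term is $Q^{\theta}_1 - Q^{\theta u}_1 (Q^{uu}_1)^{-1} Q^u_1$ and the quadratic (Hessian) term is $Q^{\theta\theta}_1 - Q^{\theta u}_1 (Q^{uu}_1)^{-1} Q^{u\theta}_1$; (iv) observe these are, by construction of the second-order expansion, the true gradient $V^\theta_1$ and Hessian $V^{\theta\theta}_1$ of the reduced value function evaluated at the nominal point — indeed these are precisely the expressions appearing in \cref{eq/parameterized_value_fn_derivatives} for $V^\theta_t$ and $V^{\theta\theta}_t$ specialized to $t=1$; (v) conclude that $\mathbf{m}$ from \cref{eq/parameter_update_gain} equals $-(V^{\theta\theta}_1)^{-1} V^{\theta}_1$, the Newton step on $g$, and hence $\delta\btheta^* = \epsilon\,\mathbf{m}$ is the damped Newton step, with $\epsilon$ the line-search damping factor.

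I would need to be slightly careful about one point: the minimization over $\delta\mathbf{u}_1$ must use the \emph{unscaled} feedforward gain $\mathbf{k}_1$ (i.e., $\delta\mathbf{u}_1 = \mathbf{k}_1 + \mathbf{K}_1\delta\mathbf{x}_1 + \mathbf{M}_1\delta\btheta$ with $\epsilon=1$), since that is the genuine argmin of the inner quadratic; the scaling $\epsilon$ is introduced afterwards purely for globalization and does not alter the identification of the Newton direction. Also, strictly speaking $V^{\theta\theta}_1$ here is the Hessian of the \emph{quadratic model} of the value function, so the statement is that $\mathbf{m}$ is the Newton step for that second-order model — which is exactly the sense in which DDP-type control updates are "Newton steps." I expect the main obstacle to be bookkeeping: carefully carrying out the partial minimization over $\delta\mathbf{u}_1$ and matching the resulting Schur-complement expressions term-by-term against \cref{eq/parameter_update_gain} and \cref{eq/parameterized_value_fn_derivatives}, while being explicit that $Q^{uu}_1 \succ 0$ (guaranteed by the regularization alluded to in \cref{subsec/alg_design}) so that the inner minimization is well-posed and the inverse exists.
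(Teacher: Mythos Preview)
Your proposal is correct and follows essentially the same approach as the paper: identify the bracketed matrix and vector in \cref{eq/parameter_update_gain} as $V^{\theta\theta}_1$ and $V^\theta_1$, respectively, and conclude that $\delta\btheta^* = \epsilon\,\mathbf{m} = -\epsilon (V^{\theta\theta}_1)^{-1} V^\theta_1$ is a damped Newton step. The paper's proof is simply terser---it cites the already-derived formulas in \cref{eq/parameterized_value_fn_derivatives} directly rather than re-deriving them via the Schur-complement/partial-minimization argument you outline in steps (i)--(iii), but your step (iv) makes exactly that identification, so the arguments coincide.
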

\begin{proof}
    See \cref{proof/parameters_are_newton_step}.
\end{proof}

\cref{prop/parameters_are_newton_step} implies \ac{PDDP} converges quadratically fast to the optimal parameters $\btheta^*$, adopting the convergence rate of Newton's method. It is important to note that \ac{PDDP} differs from a pure stagewise Newton's method in that it uses the full nonlinear dynamics during the forward pass, thus achieving better numerical convergence properties \cite{liao1992advantages}.

However, for general nonlinear, nonconvex problems, it cannot be guaranteed that $V^{\theta\theta}_1$ or $Q^{uu}_1$ remain positive definite, which motivates the addition of regularization to ensure convergence. Regularization can be accomplished through addition of a Levenberg-Marquardt parameter ensuring the Hessian matrices are always positive definite \cite{todorov2005generalized}. Throughout this work, the regularization scheme proposed in \cite{tassa2012synthesis} is adopted. Further discussion is given in \cref{subsec/alg_design}.

Next, the control updates are proven to reduce the cost after each iteration of \ac{PDDP}. To this end, an expression for the gradient of the cost function with respect to an individual control is derived, and it is shown that the variations in control, state, and parameters are $O(\epsilon)$.

\begin{lemma} \label{lemma/cost_fn_gradient}
    The gradient of the cost function with respect to the control at time $t$ is given by
    \begin{align}
        \nabla_{\mathbf{u}_t} \mathcal{J} & = \mathcal{L}^u_t + (\mathbf{F}^u_t)^\top \eta_{t + 1} \label{eq/J_u}
    ,\end{align}
    with $\eta_t = \mathcal{L}^x_t + (\mathbf{F}^x_t)^\top \eta_{t + 1}$ for $t = 1, \ldots, T$ and $\eta_{T + 1} = \phi^x_{T + 1}$.
\end{lemma}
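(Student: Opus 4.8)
The plan is to derive \cref{eq/J_u} by the classical adjoint (costate) argument, holding $\btheta$ fixed throughout since the claim concerns only the sensitivity of $\mathcal{J}$ to the control inputs. First I would note that along a feasible trajectory, perturbing $\mathbf{u}_t$ leaves $\mathbf{x}_1,\ldots,\mathbf{x}_t$ unchanged and propagates through the dynamics to change $\mathbf{x}_{t+1},\ldots,\mathbf{x}_{T+1}$, so $\mathbf{u}_t$ enters $\mathcal{J}$ both directly through the stage cost $\mathcal{L}(\mathbf{x}_t,\mathbf{u}_t;\btheta)$ and indirectly through $\mathbf{x}_{t+1}=\mathbf{F}(\mathbf{x}_t,\mathbf{u}_t;\btheta)$ and all subsequent states.

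Next I would introduce costate vectors $\eta_2,\ldots,\eta_{T+1}\in\R^{n_x}$ and form the Lagrangian
\[
\mathcal{J} = \sum_{s=1}^{T}\mathcal{L}(\mathbf{x}_s,\mathbf{u}_s;\btheta) + \phi(\mathbf{x}_{T+1};\btheta) + \sum_{s=1}^{T}\eta_{s+1}^\top\big(\mathbf{F}(\mathbf{x}_s,\mathbf{u}_s;\btheta) - \mathbf{x}_{s+1}\big),
\]
which coincides with $\mathcal{J}$ for every feasible trajectory because each bracketed term vanishes. Differentiating this identity totally with respect to $\mathbf{u}_t$ and collecting the coefficients of the induced state variations $\delta\mathbf{x}_s$ for $s=t+1,\ldots,T+1$, I would choose the costates to annihilate each such coefficient: setting the coefficient of $\delta\mathbf{x}_s$ to zero for $2\le s\le T$ gives exactly the backward recursion $\eta_s = \mathcal{L}^x_s + (\mathbf{F}^x_s)^\top\eta_{s+1}$, while the coefficient of $\delta\mathbf{x}_{T+1}$ yields the terminal condition $\eta_{T+1} = \phi^x_{T+1}$; $\delta\mathbf{x}_1$ never appears since $\mathbf{x}_1$ is fixed. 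With the costates so chosen, all indirect contributions cancel and only the terms in which $\mathbf{u}_t$ appears explicitly survive, namely $\mathcal{L}^u_t$ from the stage cost and $(\mathbf{F}^u_t)^\top\eta_{t+1}$ from the constraint term, which is precisely \cref{eq/J_u}.

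As an equivalent route I would instead expand $\nabla_{\mathbf{u}_t}\mathcal{J}$ directly by the chain rule, using the state sensitivity matrices $\partial\mathbf{x}_{s}/\partial\mathbf{u}_t$ which satisfy $\partial\mathbf{x}_{s+1}/\partial\mathbf{u}_t = \mathbf{F}^x_s\,(\partial\mathbf{x}_s/\partial\mathbf{u}_t)$ for $s>t$ with $\partial\mathbf{x}_{t+1}/\partial\mathbf{u}_t = \mathbf{F}^u_t$, and then show by downward induction on $s$ that the accumulated partial sums telescope into $\eta_{t+1}$; the two derivations differ only in bookkeeping. The main obstacle is exactly that bookkeeping: keeping the control index range $1,\ldots,T$ consistent with the state index range $1,\ldots,T+1$, getting the one-step shift in the recursion ($\eta_t$ depends on $\eta_{t+1}$) and the boundary index $T+1$ right, and being careful that $\mathbf{x}_1$ contributes no stationarity condition. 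Once the index conventions are pinned down, the remaining algebra is routine.
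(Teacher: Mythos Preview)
Your proposal is correct. The primary route you take---introducing costates $\eta_{s}$ as Lagrange multipliers on the dynamics constraints, differentiating the Lagrangian, and choosing the multipliers via the backward recursion so that all induced state-variation terms cancel---is the standard adjoint argument and yields exactly \cref{eq/J_u}. The paper instead takes your ``equivalent route'': it expands $\nabla_{\mathbf{u}_t}\mathcal{J}$ directly by the chain rule, writing out every term $(\mathcal{L}^x_s)^\top \mathbf{F}^x_{s-1}\cdots\mathbf{F}^x_{t+1}\mathbf{F}^u_t$ for $s=t+1,\ldots,T$ together with the terminal contribution, factors out $(\mathbf{F}^u_t)^\top$, and then recognizes the nested remainder as the recursive definition of $\eta_{t+1}$. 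The two derivations are dual: yours defines $\eta$ a priori and lets the cancellation do the work, while the paper's derivation discovers $\eta$ a posteriori by grouping the expanded terms. Your approach is arguably cleaner for avoiding the long explicit expansion and scales more gracefully if one later wants $\nabla_{\btheta}\mathcal{J}$ as well; the paper's approach has the virtue of being fully constructive and making the telescoping structure visible without invoking multipliers.
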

\begin{proof}
    See \cref{proof/cost_fn_gradient}.
\end{proof}

\begin{lemma} \label{lemma/updates_order_epsilon}
    For the form of $\delta \btheta$ given in \cref{eq/scaled_optimal_parameter_update}, it is true that
    \begin{align}
        \delta \btheta & = O(\epsilon)
    ,\end{align}
    where $O(\cdot)$ corresponds to big-O notation in $\norm{\cdot}_2$.

    Further, for all $t = 1, \ldots, T$,
    \begin{subequations} \label{eq/delta_u_x_O_epsilon}
        \begin{align}
            \delta \mathbf{u}_t & = O(\epsilon) ,\\
            \delta \mathbf{x}_{t + 1} & = O(\epsilon) .
        \end{align}
    \end{subequations}
\end{lemma}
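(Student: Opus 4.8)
The plan is to argue by a short induction on the stage index $t$, leaning only on the closed-form update rules from \cref{subsec/convergence} and a first-order Taylor expansion of the dynamics. The statement $\delta \btheta = O(\epsilon)$ is essentially free: by \cref{eq/scaled_optimal_parameter_update} we have $\delta \btheta^* = \epsilon \mathbf{m}$, and $\mathbf{m}$ in \cref{eq/parameter_update_gain} is built entirely from quantities evaluated along the current nominal trajectory ($Q^{\theta}_1$, $Q^{\theta\theta}_1$, $Q^{uu}_1$, $Q^{u\theta}_1$, $Q^u_1$), with the required inverses existing after the Levenberg--Marquardt-type regularization discussed above; in particular $\mathbf{m}$ does not depend on $\epsilon$. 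Hence $\norm{\delta\btheta}_2 = \epsilon \norm{\mathbf{m}}_2 = O(\epsilon)$.

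For the controls and states I would induct with base case $\delta \mathbf{x}_1 = \mathbf{0}$, which holds since the initial condition is fixed, so trivially $\delta\mathbf{x}_1 = O(\epsilon)$. For the inductive step, assume $\delta \mathbf{x}_t = O(\epsilon)$. The scaled control update \cref{eq/optimal_control_update} gives $\delta \mathbf{u}_t^* = \epsilon \mathbf{k}_t + \mathbf{K}_t \delta \mathbf{x}_t + \mathbf{M}_t \delta \btheta^*$; the gains $\mathbf{k}_t, \mathbf{K}_t, \mathbf{M}_t$ of \cref{eq/parameterized_gains} are again $\epsilon$-independent nominal-trajectory quantities, so the first term is $O(\epsilon)$ directly, the second is $O(\epsilon)$ by the inductive hypothesis, and the third is $O(\epsilon)$ by the bound on $\delta\btheta$ just established, whence $\delta \mathbf{u}_t = O(\epsilon)$. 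Since the forward pass applies the \emph{true} nonlinear dynamics \cref{eq/parameterized_dynamics}, $\delta\mathbf{x}_{t+1} = \mathbf{F}(\bar{\mathbf{x}}_t + \delta\mathbf{x}_t, \bar{\mathbf{u}}_t + \delta\mathbf{u}_t; \bar{\btheta} + \delta\btheta) - \mathbf{F}(\bar{\mathbf{x}}_t, \bar{\mathbf{u}}_t; \bar{\btheta})$, and a first-order expansion using twice-differentiability of $\mathbf{F}$ yields
\[
\delta\mathbf{x}_{t+1} = \mathbf{F}^x_t \delta\mathbf{x}_t + \mathbf{F}^u_t \delta\mathbf{u}_t + \mathbf{F}^\theta_t \delta\btheta + O\bigl(\norm{\delta\mathbf{x}_t}_2^2 + \norm{\delta\mathbf{u}_t}_2^2 + \norm{\delta\btheta}_2^2\bigr).
\]
Every term on the right is $O(\epsilon)$ (the remainder being $O(\epsilon^2)$), so $\delta\mathbf{x}_{t+1} = O(\epsilon)$, closing the induction.

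The only points requiring a little care are bookkeeping. First, the hidden constants must be uniform in $t$; since $T$ is finite this is handled by taking the maximum over the finitely many stages of the per-stage constants (norms of Jacobians, gains, and the Hessian controlling the Taylor remainder on a fixed compact neighborhood of the nominal trajectory). Second, the expansion of the nonlinear forward pass is only valid once $\epsilon$ is small enough that the perturbed trajectory stays in that neighborhood, where $\mathbf{F}$ and $\mathcal{L}$ and their derivatives are bounded; this is exactly what the differentiability/continuity assumptions guarantee, and it is what makes the remainder genuinely $O(\epsilon^2)$. I do not anticipate any real obstacle; the one substantive step (as opposed to pure algebra) is propagating the $O(\epsilon)$ estimate through the \emph{nonlinear} forward dynamics via the Taylor expansion above rather than through the linearized model used in the backward pass.
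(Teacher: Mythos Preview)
Your proposal is correct and follows essentially the same approach as the paper's proof: both argue $\delta\btheta = \epsilon\mathbf{m} = O(\epsilon)$ directly and then induct on $t$ from the base case $\delta\mathbf{x}_1 = \mathbf{0}$, using the control update \cref{eq/optimal_control_update} to get $\delta\mathbf{u}_t = O(\epsilon)$ and a first-order Taylor expansion of the nonlinear dynamics to get $\delta\mathbf{x}_{t+1} = O(\epsilon)$. Your additional bookkeeping remarks (uniformity of constants over the finite horizon and validity of the Taylor remainder in a neighborhood) are not made explicit in the paper but are consistent with it.
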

\begin{proof}
    See \cref{proof/updates_order_epsilon}.
\end{proof}

Next, the control updates are shown to be a descent direction of the cost.
\begin{proposition} \label{prop/control_updates_are_descent_direction}
    The optimal control updates satisfy
    \begin{align}
        (\nabla_{\mathbf{U}} \mathcal{J})^\top \Delta \mathbf{U} & = \sum_{t = 1}^{T} (\nabla_{\mathbf{u}_t} \mathcal{J})^\top \delta \mathbf{u}_t < 0 \label{eq/control_updates_descent_direction}
    .\end{align}
\end{proposition}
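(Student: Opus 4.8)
The plan is to expand $(\nabla_{\mathbf U}\mathcal J)^\top\Delta\mathbf U = \sum_{t=1}^T(\nabla_{\mathbf u_t}\mathcal J)^\top\delta\mathbf u_t$ in powers of the step size $\epsilon$ and show that its leading-order (in $\epsilon$) term is strictly negative. First I would use \cref{lemma/cost_fn_gradient}, writing $\nabla_{\mathbf u_t}\mathcal J = \mathcal{L}^u_t + (\mathbf F^u_t)^\top\eta_{t+1}$ with the backward adjoint $\eta_t$, and then substitute the control update \cref{eq/optimal_control_update}, $\delta\mathbf u_t = \epsilon\mathbf k_t + \mathbf K_t\delta\mathbf x_t + \mathbf M_t\delta\btheta$, together with $\delta\btheta = \epsilon\mathbf m$ from \cref{eq/scaled_optimal_parameter_update}. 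By \cref{lemma/updates_order_epsilon}, every variation $\delta\mathbf x_t,\delta\mathbf u_t,\delta\btheta$ is $O(\epsilon)$, so the forward pass obeys the \emph{linearized} closed-loop recursion $\delta\mathbf x_{t+1} = \mathbf F^x_t\delta\mathbf x_t + \mathbf F^u_t\delta\mathbf u_t + \mathbf F^\theta_t\delta\btheta + O(\epsilon^2)$; this is what lets the omitted second-order dynamics contributions be collected into an $O(\epsilon^2)$ remainder throughout.

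The core is a backward summation. Using $\mathbf F^u_t\delta\mathbf u_t = \delta\mathbf x_{t+1} - \mathbf F^x_t\delta\mathbf x_t - \mathbf F^\theta_t\delta\btheta + O(\epsilon^2)$ and the adjoint recursion $\eta_t = \mathcal{L}^x_t + (\mathbf F^x_t)^\top\eta_{t+1}$, I would telescope the sum, using $\delta\mathbf x_1 = \mathbf 0$ and $\eta_{T+1} = \phi^x_{T+1}$ to eliminate the boundary terms, and then rewrite the result in terms of the $Q$-derivatives of \cref{eq/parameterized_Q_derivatives} and the backward-pass value derivatives of \cref{eq/parameterized_value_fn_derivatives}. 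The key algebraic fact is that $\eta_t$ and $V^x_t$ differ only by $e_t \defeq \eta_t - V^x_t$, which satisfies $e_t = (\mathbf F^x_t)^\top e_{t+1} + Q^{xu}_t(Q^{uu}_t)^{-1}Q^u_t$ with $e_{T+1} = \mathbf 0$; feeding this into the telescoped sum, the feedback contributions $\mathbf K_t\delta\mathbf x_t = -(Q^{uu}_t)^{-1}Q^{ux}_t\delta\mathbf x_t$ exactly cancel the $\delta\mathbf x_t$-dependent pieces produced by the recursion. What is left at $O(\epsilon)$ is $-\epsilon\sum_{t=1}^T(Q^u_t)^\top(Q^{uu}_t)^{-1}Q^u_t$ plus a term linear in the parameter step $\mathbf m$; combining the latter with the remaining parameter contribution and the identity $\mathbf m = -(V^{\theta\theta}_1)^{-1}V^\theta_1$ furnished by \cref{prop/parameters_are_newton_step} and \cref{eq/parameter_update_gain} collapses it, so I expect the identity to take the form $(\nabla_{\mathbf U}\mathcal J)^\top\Delta\mathbf U + (\nabla_{\btheta}\mathcal J)^\top\delta\btheta = -\epsilon\big[\sum_t(Q^u_t)^\top(Q^{uu}_t)^{-1}Q^u_t + (V^\theta_1)^\top(V^{\theta\theta}_1)^{-1}V^\theta_1\big] + O(\epsilon^2)$, from which the control-only statement follows once the parameter contribution is accounted for separately.

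To conclude, the regularization of \cref{subsec/convergence} keeps $Q^{uu}_t$ and $V^{\theta\theta}_1$ positive definite, so the bracket is a sum of nonnegative quadratic forms and is strictly positive unless $Q^u_t = \mathbf 0$ for all $t$ and $V^\theta_1 = \mathbf 0$ --- which is precisely the first-order stationarity condition that would terminate the algorithm. Away from such a point, choosing $\epsilon$ small enough (exactly what the Armijo/Wolfe line search of \cref{subsec/convergence} does) makes the $O(\epsilon^2)$ remainder negligible against the $O(\epsilon)$ term, giving $(\nabla_{\mathbf U}\mathcal J)^\top\Delta\mathbf U < 0$.

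The step I expect to be the main obstacle is the bookkeeping of the backward summation: one must verify that the feedback terms $\mathbf K_t\delta\mathbf x_t$ and the parameter-coupling terms $\mathbf M_t\delta\btheta$ interact with the adjoint recursion so that all $\delta\mathbf x_t$ pieces cancel cleanly and the $\mathbf m$-dependent pieces reassemble into the damped Newton form, while simultaneously certifying that every dropped term is genuinely $O(\epsilon^2)$ by \cref{lemma/updates_order_epsilon}. A secondary subtlety is that $\eta_t \neq V^x_t$ in general --- they coincide only at convergence --- so the auxiliary recursion for $e_t$ above is the bridge needed between \cref{eq/J_u} and the backward-pass quantities of \cref{eq/parameterized_value_fn_derivatives}.
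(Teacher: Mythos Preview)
Your plan is essentially the paper's: a backward induction on the partial sums $\sum_{t=i}^T(\nabla_{\mathbf u_t}\mathcal J)^\top\delta\mathbf u_t$ using \cref{lemma/cost_fn_gradient}, the update \cref{eq/optimal_control_update}, the linearized forward recursion justified by \cref{lemma/updates_order_epsilon}, and the bridge between $\eta_t$ and $V^x_t$. The paper carries this as
\[
\sum_{t=i}^T(\nabla_{\mathbf u_t}\mathcal J)^\top\delta\mathbf u_t = -\epsilon\sum_{t=i}^T(\lambda_t+\gamma_t\mathbf m)+(V^x_i-\eta_i)^\top\delta\mathbf x_i+O(\epsilon^2),
\]
with $\gamma_t=(Q^u_t)^\top(Q^{uu}_t)^{-1}Q^{u\theta}_t$, which is exactly your $e_t$-recursion repackaged (your $e_t=\eta_t-V^x_t$ satisfies the recursion you wrote, and $(V^x_i-\eta_i)^\top\delta\mathbf x_i=-e_i^\top\delta\mathbf x_i$).

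The one place your expectation diverges from the paper is the endpoint. The paper does \emph{not} obtain the clean combined identity you anticipate; at $i=1$ it stops at
\[
\sum_{t=1}^T(\nabla_{\mathbf u_t}\mathcal J)^\top\delta\mathbf u_t=-\epsilon\sum_{t=1}^T\lambda_t+\epsilon\Big(\sum_{t=1}^T\gamma_t\Big)(V^{\theta\theta}_1)^{-1}V^\theta_1+O(\epsilon^2),
\]
and the sign-indefinite cross term in $\gamma_t$ is left as is rather than absorbed into a separate $(\nabla_\btheta\mathcal J)^\top\delta\btheta$ computation. So your step ``the control-only statement follows once the parameter contribution is accounted for separately'' is not how the paper concludes; it simply asserts that the displayed $O(\epsilon)$ quantity is negative for $\epsilon$ small. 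Your telescoping will land you at the same formula, but expect the residual $\gamma_t$ cross term rather than the combined form.
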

\begin{proof}
    It can be shown that
    \begin{equation*}
        \begin{split}
            \sum_{t = 1}^{T} & (\nabla_{\mathbf{u}_t} \mathcal{J})^\top \delta \mathbf{u}_t \\
            & = - \epsilon \sum_{t = 1}^{T} \lambda_t + \epsilon \left( \sum_{t = 1}^{T} \gamma_t \right) (V^{\theta\theta}_1)^{-1} V^\theta_1 + O(\epsilon^2)
        ,\end{split}
    \end{equation*}
    where $\lambda_t = (Q^u_t)^\top (Q^{uu}_t)^{-1} Q^u_t$ and $\gamma_t = (Q^u_t)^\top (Q^{uu}_t)^{-1} Q^{u \theta}_t$. This implies there exists some $\epsilon$ sufficiently small such that \cref{eq/control_updates_descent_direction} holds. The full proof is given in \cref{proof/control_updates_are_descent_direction}.
\end{proof}
\begin{remark}
    The row vector $\sum_{t = 1}^T \gamma_t$ can be thought of as correcting for the change in $\btheta$, as both the parameters and the controls are optimized simultaneously. The inner product between $\sum_{t = 1}^T \gamma_t$ and the parameter ascent direction $(V^{\theta\theta}_1)^{-1} V^{\theta}_1$ ensures that the optimizer does not take too large of a step in the case when the controls and the parameters both contribute to a reduction in cost.
\end{remark}

To show the cost reduction achievable after a single iteration of \ac{PDDP}, the expression for $\delta\btheta^*$ must be substituted into the value function approximation given by \cref{eq/parameterized_Q_approximation}. This substitution means the terms in $\delta\btheta$ are now incorporated into the zero-order term $V^0_t$. The previous expression for $V^0_t$ given in \cref{eq/parameterized_value_fn_derivatives} only accounts for the change in controls $\delta \mathbf{u}_t^*$, which was necessary to derive a backwards rule for the value function derivatives with respect to $\btheta$. The following lemma establishes the value function change by applying the updates $\delta \mathbf{u}_t^*$ and $\delta \btheta^*$.

\begin{lemma} \label{lemma/value_fn_full_zero_order_term}
    The zero-order value function approximation term satisfies
    \begin{align}
        V^0_t & = Q^0_t - \epsilon (1 - \frac{1}{2} \epsilon) \lambda_t + \epsilon (V^\theta_t)^\top \mathbf{m} + \frac{1}{2} \epsilon^2 \mathbf{m}^\top V^{\theta\theta}_t \mathbf{m} \label{eq/V_0_t_full}
    .\end{align}

    Further, at time $t = 1$, the following is true:
    \begin{align}
        V^0_1 & = Q^0_1 - \epsilon (1 - \frac{1}{2} \epsilon) (\lambda_1 + \psi) \label{eq/V_0_1}
    ,\end{align}
    where $\psi = (V^\theta_1)^\top (V^{\theta\theta}_1)^{-1} V^\theta_1$.
\end{lemma}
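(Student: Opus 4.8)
The plan is to derive \cref{eq/V_0_t_full} by substituting the full optimal updates into the quadratic $Q$-model, and then obtain \cref{eq/V_0_1} by exploiting the special form the parameter update takes at the initial time. For the first part I would start from \cref{eq/parameterized_Q_approximation}, set $\delta\mathbf{x}_t = \mathbf{0}$ since $V^0_t$ is the value at the \emph{nominal} state, and insert $\delta\btheta = \delta\btheta^* = \epsilon\mathbf{m}$ from \cref{eq/scaled_optimal_parameter_update} together with $\delta\mathbf{u}_t = \epsilon\mathbf{k}_t + \mathbf{M}_t\delta\btheta^*$ from \cref{eq/optimal_control_update}. Using $\mathbf{k}_t = -(Q^{uu}_t)^{-1}Q^u_t$ and $\mathbf{M}_t = -(Q^{uu}_t)^{-1}Q^{u\theta}_t$ from \cref{eq/parameterized_gains}, I would then group the resulting terms by type: those proportional to $\lambda_t = (Q^u_t)^\top(Q^{uu}_t)^{-1}Q^u_t$, those involving $\gamma_t\mathbf{m}$ with $\gamma_t = (Q^u_t)^\top(Q^{uu}_t)^{-1}Q^{u\theta}_t$, those quadratic in $\mathbf{m}$ through $Q^{\theta u}_t(Q^{uu}_t)^{-1}Q^{u\theta}_t$, and the leftover $\epsilon(Q^\theta_t)^\top\mathbf{m}$ and $\tfrac12\epsilon^2\mathbf{m}^\top Q^{\theta\theta}_t\mathbf{m}$ terms.

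The key cancellations are: the $\lambda_t$ contributions (one linear, one quadratic in $\epsilon$) combine to $-\epsilon(1-\tfrac12\epsilon)\lambda_t$; the three sources of $\gamma_t\mathbf{m}$ — from $(Q^u_t)^\top\delta\mathbf{u}_t$, from $\tfrac12\delta\mathbf{u}_t^\top Q^{uu}_t\delta\mathbf{u}_t$, and from $\delta\mathbf{u}_t^\top Q^{u\theta}_t\delta\btheta$ — collapse to $-\epsilon\gamma_t\mathbf{m}$, which together with $\epsilon(Q^\theta_t)^\top\mathbf{m}$ is exactly $\epsilon(V^\theta_t)^\top\mathbf{m}$ by the definition of $V^\theta_t$ in \cref{eq/parameterized_value_fn_derivatives}; and the quadratic-in-$\mathbf{m}$ terms reduce to $\tfrac12\epsilon^2\mathbf{m}^\top\!\big(Q^{\theta\theta}_t - Q^{\theta u}_t(Q^{uu}_t)^{-1}Q^{u\theta}_t\big)\mathbf{m} = \tfrac12\epsilon^2\mathbf{m}^\top V^{\theta\theta}_t\mathbf{m}$, again by \cref{eq/parameterized_value_fn_derivatives}. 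This proves \cref{eq/V_0_t_full}. A quicker route to the same identity: \cref{eq/parameterized_value_fn_approx} with the derivatives \cref{eq/parameterized_value_fn_derivatives} is, by construction, the quadratic model of $V(\mathbf{x}_t;\btheta)$ in the joint variable $(\delta\mathbf{x}_t,\delta\btheta)$ after the $\epsilon$-scaled control update has been folded in, so evaluating it at $\delta\mathbf{x}_t=\mathbf{0}$, $\delta\btheta=\epsilon\mathbf{m}$ immediately gives $V^0_t + \epsilon(V^\theta_t)^\top\mathbf{m} + \tfrac12\epsilon^2\mathbf{m}^\top V^{\theta\theta}_t\mathbf{m}$, and substituting $V^0_t = Q^0_t + (\tfrac12\epsilon^2-\epsilon)\lambda_t$ recovers \cref{eq/V_0_t_full}.

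For \cref{eq/V_0_1} I would specialize to $t=1$. Comparing the parameter update \cref{eq/parameter_update_gain} with the expressions for $V^\theta_1$ and $V^{\theta\theta}_1$ in \cref{eq/parameterized_value_fn_derivatives} shows $\mathbf{m} = -(V^{\theta\theta}_1)^{-1}V^\theta_1$, which is the identity that makes the $\btheta$-terms telescope. Plugging this into \cref{eq/V_0_t_full} at $t=1$ and using that $V^{\theta\theta}_1$ (hence its inverse) is symmetric gives $\epsilon(V^\theta_1)^\top\mathbf{m} = -\epsilon\psi$ and $\tfrac12\epsilon^2\mathbf{m}^\top V^{\theta\theta}_1\mathbf{m} = \tfrac12\epsilon^2\psi$ with $\psi = (V^\theta_1)^\top(V^{\theta\theta}_1)^{-1}V^\theta_1$, so their sum is $-\epsilon(1-\tfrac12\epsilon)\psi$; combining with the $-\epsilon(1-\tfrac12\epsilon)\lambda_1$ term from \cref{eq/V_0_t_full} yields \cref{eq/V_0_1}.

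The main obstacle is the index-heavy algebra of the direct substitution — tracking every cross term between $\delta\mathbf{u}_t$, which already contains the $\mathbf{M}_t\delta\btheta$ feedback, and $\delta\btheta$ itself, and checking that the $\tfrac12$ factors from the quadratic form assemble into the $(1-\tfrac12\epsilon)$ pattern. The shortcut through \cref{eq/parameterized_value_fn_approx} sidesteps nearly all of it, provided one is comfortable asserting that that expansion genuinely is the quadratic model of $V$ in $(\delta\mathbf{x}_t,\delta\btheta)$ — which holds because \cref{eq/parameterized_value_fn_derivatives} was obtained by equating it term-by-term with $Q$ evaluated along the scaled update $\delta\mathbf{u}_t^*$.
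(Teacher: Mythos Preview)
Your proposal is correct and follows essentially the same route as the paper: substitute $\delta\mathbf{u}_t^* = \epsilon\mathbf{k}_t + \mathbf{M}_t\,\epsilon\mathbf{m}$ and $\delta\btheta^* = \epsilon\mathbf{m}$ into the quadratic $Q$-model \cref{eq/parameterized_Q_approximation}, collect the zero-order (in $\delta\mathbf{x}_t$) terms, and recognize the combinations $V^\theta_t$ and $V^{\theta\theta}_t$ from \cref{eq/parameterized_value_fn_derivatives}; then at $t=1$ use $\mathbf{m} = -(V^{\theta\theta}_1)^{-1}V^\theta_1$ to collapse the last two terms into $-\epsilon(1-\tfrac12\epsilon)\psi$. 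Your ``shortcut'' via evaluating \cref{eq/parameterized_value_fn_approx} at $(\delta\mathbf{x}_t,\delta\btheta)=(\mathbf{0},\epsilon\mathbf{m})$ and then using the $V^0_t$ entry of \cref{eq/parameterized_value_fn_derivatives} is a clean repackaging of the same computation that the paper does not state explicitly, but it relies on exactly the same identities.
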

\begin{proof}
    See \cref{proof/value_fn_full_zero_order_term}.
\end{proof}
\begin{remark}\label{remark/newton_decrement}
    $\psi$ can be interpreted as the Newton decrement on the parameters. Likewise, $\lambda_t$ can be interpreted as the Newton decrement on the controls at each timestep. These values can be used as an effective stopping criteria for the algorithm as well as verification for the line search parameter $\epsilon$ \cite{boyd2004convex}. Further discussion is provided in the following proposition.
\end{remark}

Now, it is possible to show the cost reduction achievable after an iteration of \ac{PDDP}.
\begin{proposition}\label{prop/cost_reduction}
    The total cost reduction after each iteration of \ac{PDDP} is
    \begin{align}
        \Delta \mathcal{J} & = -\epsilon (1 - \frac{1}{2} \epsilon) \left(\sum_{t = 1}^T \lambda_t + \psi \right) + O(\epsilon^3)
    .\end{align}
\end{proposition}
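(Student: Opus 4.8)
The plan is to identify the per-iteration cost reduction with the change, through one iteration, of the zero-order term of the value-function model at $t = 1$, and then to unroll that term's backward recursion down to the initial stage. Write $\mathcal{J}_{\text{old}} = \mathcal{J}(\bar{\mathbf{U}};\bar{\btheta}) = \sum_{t=1}^T \mathcal{L}^0_t + \phi^0_{T+1}$ for the nominal cost. First I would argue that the cost $\mathcal{J}_{\text{new}}$ of the trajectory produced by the forward pass (rolling out the nonlinear dynamics under the updates $\delta\mathbf{u}_t^*$ and $\delta\btheta^* = \epsilon\mathbf{m}$) agrees with the second-order model value, i.e.\ with the ``full'' zero-order term $V^0_1$ of \cref{lemma/value_fn_full_zero_order_term}, up to $O(\epsilon^3)$. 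This is exactly where \cref{lemma/updates_order_epsilon} enters: since $\delta\mathbf{x}_t$, $\delta\mathbf{u}_t$, and $\delta\btheta$ are all $O(\epsilon)$, the third-order Taylor remainders of the twice-differentiable running and terminal costs, together with the discrepancy between the true nonlinear rollout and its second-order expansion, accumulate to only $O(\epsilon^3)$ over the finite horizon. Hence $\Delta\mathcal{J} = V^0_1 - \mathcal{J}_{\text{old}} + O(\epsilon^3)$.

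Second I would establish the telescoped form of the backward-pass zero-order term. From \cref{eq/parameterized_value_fn_derivatives} one has $V^0_t = Q^0_t - \epsilon(1 - \tfrac{1}{2}\epsilon)\lambda_t$, and from \cref{eq/parameterized_Q_derivatives} one has $Q^0_t = \mathcal{L}^0_t + V^0_{t+1}$; starting from $V^0_{T+1} = \phi^0_{T+1}$, an easy downward induction gives $V^0_t = \sum_{s=t}^T \mathcal{L}^0_s + \phi^0_{T+1} - \epsilon(1 - \tfrac{1}{2}\epsilon)\sum_{s=t}^T \lambda_s$. In particular, this identity at $t = 2$ supplies the value of $V^0_2$ that appears inside $Q^0_1 = \mathcal{L}^0_1 + V^0_2$.

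Third I would combine this with \cref{eq/V_0_1} of \cref{lemma/value_fn_full_zero_order_term}, namely $V^0_1 = Q^0_1 - \epsilon(1 - \tfrac{1}{2}\epsilon)(\lambda_1 + \psi)$, which already absorbs the parameter update via $\mathbf{m} = -(V^{\theta\theta}_1)^{-1}V^\theta_1$ (read off from \cref{eq/parameter_update_gain} and \cref{eq/parameterized_value_fn_derivatives}), so that $\epsilon(V^\theta_1)^\top\mathbf{m} + \tfrac{1}{2}\epsilon^2\mathbf{m}^\top V^{\theta\theta}_1\mathbf{m} = -\epsilon(1 - \tfrac{1}{2}\epsilon)\psi$. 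Substituting $Q^0_1 = \mathcal{L}^0_1 + V^0_2$ with the telescoped $V^0_2$ yields $V^0_1 = \sum_{t=1}^T \mathcal{L}^0_t + \phi^0_{T+1} - \epsilon(1 - \tfrac{1}{2}\epsilon)\big(\sum_{t=1}^T \lambda_t + \psi\big) = \mathcal{J}_{\text{old}} - \epsilon(1 - \tfrac{1}{2}\epsilon)\big(\sum_{t=1}^T \lambda_t + \psi\big)$. Subtracting $\mathcal{J}_{\text{old}}$ and reinstating the remainder from the first step gives the stated $\Delta\mathcal{J}$.

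The step I expect to be the main obstacle is the first one: making precise that the forward rollout under $O(\epsilon)$ updates produces a cost matching the second-order model to within $O(\epsilon^3)$. This requires tracking how the $O(\epsilon)$ feedforward and feedback updates propagate through the nonlinear dynamics so that every $\delta\mathbf{x}_t$ stays $O(\epsilon)$ (supplied by \cref{lemma/updates_order_epsilon}), and then bounding the accumulated third-order Taylor error stage by stage under the standing smoothness assumptions. A secondary, purely bookkeeping subtlety is to keep the backward-pass $V^0_t$ of \cref{eq/parameterized_value_fn_derivatives} --- which accounts only for the control updates and is the quantity appearing inside $Q^0_1$ --- distinct from the ``full'' $V^0_1$ of \cref{lemma/value_fn_full_zero_order_term}, so that the $\psi$ contribution is counted exactly once, at $t = 1$, rather than once per stage.
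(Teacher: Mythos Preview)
Your proposal is correct and follows essentially the same route as the paper: identify $\mathcal{J}_{\text{new}} = V^0_1 + O(\epsilon^3)$, invoke \cref{eq/V_0_1} at $t=1$, and then telescope $Q^0_t = \mathcal{L}^0_t + V^0_{t+1}$ together with the backward-pass relation $V^0_t = Q^0_t - \epsilon(1-\tfrac{1}{2}\epsilon)\lambda_t$ down to the terminal cost. Your explicit separation of the backward-pass $V^0_t$ from the ``full'' $V^0_1$, and your use of \cref{lemma/updates_order_epsilon} to justify the $O(\epsilon^3)$ remainder, are in fact more careful than the paper, which simply asserts $\mathcal{J}^{(k+1)} = V^0_1 + O(\epsilon^3)$ without elaboration.
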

\begin{proof}
    See \cref{proof/cost_reduction}.
\end{proof}
\begin{remark}
    \cref{prop/cost_reduction} implies that a valid line search candidate at each iteration should satisfy the following cost reduction:
    \begin{align}
        \Delta \mathcal{J} \leq - \kappa \epsilon \left( \sum_{t = 1}^T \lambda_t + \psi \right)
    ,\end{align}
    for a small constant $\kappa > 0$. This condition is similar to the line search criteria for Newton's method, and ensures that Armijo's condition or the Wolfe conditions hold \cite{boyd2004convex}.
\end{remark}

Finally, the convergence of \ac{PDDP} to a minimum of the cost is proven.
\begin{theorem} \label{thm/pddp_converges}
    As the number of iterations of \ac{PDDP} approaches infinity, the cost $\mathcal{J}$, the control trajectory $\mathbf{U}$, and the parameters $\btheta$ converge to a stationary point regardless of initialization.
\end{theorem}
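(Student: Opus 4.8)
The plan is the standard global-convergence argument for a line-search descent method, built on the cost-reduction formula of \cref{prop/cost_reduction}. First I would record that, with the regularization of \cite{tassa2012synthesis} active, $Q^{uu}_t$ and $V^{\theta\theta}_1$ are positive definite with eigenvalues bounded below by some $\mu > 0$; since the iterates stay in the sublevel set $\{\mathcal{J}\le\mathcal{J}^{(0)}\}$, on which the twice-differentiable problem data are bounded, these Hessians are also bounded above. Hence the Newton decrements $\lambda_t = (Q^u_t)^\top (Q^{uu}_t)^{-1} Q^u_t \ge 0$ and $\psi = (V^\theta_1)^\top (V^{\theta\theta}_1)^{-1} V^\theta_1 \ge 0$, so by \cref{prop/cost_reduction} (together with \cref{prop/control_updates_are_descent_direction}, which makes the search direction a genuine descent direction) there is $\epsilon>0$ with $\Delta\mathcal{J}<0$ whenever $\sum_t\lambda_t+\psi>0$, which the line search selects. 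Assuming, as is standard, that $\mathcal{J}$ is bounded below on $\{\mathcal{J}\le\mathcal{J}^{(0)}\}$ (e.g. $\mathcal{L},\phi$ bounded below and this sublevel set compact), the monotone sequence $\mathcal{J}^{(k)}$ converges to some $\mathcal{J}^\infty$. Using the Armijo-type acceptance test from the remark after \cref{prop/cost_reduction}, $\Delta\mathcal{J}^{(k)}\le-\kappa\,\epsilon^{(k)}\big(\sum_t\lambda_t^{(k)}+\psi^{(k)}\big)$, telescoping gives $\kappa\sum_k \epsilon^{(k)}\big(\sum_t\lambda_t^{(k)}+\psi^{(k)}\big)\le \mathcal{J}^{(0)}-\mathcal{J}^\infty<\infty$.

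Next I would argue that the accepted step $\epsilon^{(k)}$ cannot shrink to zero without the decrement already vanishing: because the quadratic-model error is $O(\epsilon^3)$ \emph{uniformly} on the sublevel set (again using boundedness of the Hessians and third derivatives), a backtracking search accepts some $\epsilon^{(k)}\ge\epsilon_{\min}>0$ as long as $\sum_t\lambda_t^{(k)}+\psi^{(k)}$ is bounded away from $0$. Combined with the summability above, this forces $\sum_t\lambda_t^{(k)}+\psi^{(k)}\to 0$; since every term is nonnegative, $\lambda_t^{(k)}\to 0$ for each $t$ and $\psi^{(k)}\to 0$, and the uniform eigenvalue lower bound $\mu$ then yields $Q^u_t\to 0$ for all $t$ and $V^\theta_1\to 0$.

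It remains to identify these limits with the gradient of $\mathcal{J}$. From the backward recursion $V^x_t = Q^x_t - Q^{xu}_t (Q^{uu}_t)^{-1} Q^u_t$ of \cref{eq/parameterized_value_fn_derivatives}, as $Q^u_t\to 0$ the sequence $t\mapsto V^x_t$ collapses onto the costate recursion $\eta_t = \mathcal{L}^x_t + (\mathbf{F}^x_t)^\top\eta_{t+1}$ of \cref{lemma/cost_fn_gradient}, so $Q^u_t = \mathcal{L}^u_t + (\mathbf{F}^u_t)^\top V^x_{t+1}\to \mathcal{L}^u_t + (\mathbf{F}^u_t)^\top\eta_{t+1} = \nabla_{\mathbf{u}_t}\mathcal{J}$; hence $\nabla_{\mathbf{U}}\mathcal{J}\to 0$. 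Likewise $V^\theta_1 = \nabla_\btheta V(\mathbf{x}_1;\btheta) = \nabla_\btheta\mathcal{J}$ (by \cref{eq/theta_star} and the computation underlying \cref{prop/parameters_are_newton_step}), so $\nabla_\btheta\mathcal{J}\to 0$. Finally, the feedforward gains satisfy $\|\mathbf{k}_t^{(k)}\|^2\le\mu^{-1}\lambda_t^{(k)}\to 0$ and $\|\mathbf{m}^{(k)}\|\to 0$, so by \cref{lemma/updates_order_epsilon} the updates $\delta\mathbf{u}_t,\delta\mathbf{x}_t,\delta\btheta$ all vanish; with compactness of the sublevel set this shows every accumulation point of $(\mathbf{U},\btheta)$ satisfies $\nabla\mathcal{J}=0$, i.e. is a stationary point, for any initialization.

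The main obstacle I anticipate is the second step — ruling out $\epsilon^{(k)}\to 0$ faster than the decrements, i.e. keeping the line-search parameter bounded away from zero; this is where the uniform $O(\epsilon^3)$ control of the model error on the sublevel set must be established carefully rather than merely quoted from \cref{prop/cost_reduction}. A secondary subtlety is that ``convergence to a stationary point'' here is really convergence of the accumulation points, which can be upgraded to convergence of the whole sequence under an additional Łojasiewicz/Kurdyka-type condition or an isolated-stationary-point assumption; absent that, one records (via the vanishing of the feedforward gains) that the per-iteration displacement tends to zero and every limit point is stationary.
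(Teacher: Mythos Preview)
Your proposal follows the same skeleton as the paper's proof: use \cref{prop/cost_reduction} to get a monotone decreasing cost sequence, invoke compactness to make it converge, deduce that $\sum_t\lambda_t+\psi\to 0$, hence $\mathbf{k}_t,\mathbf{m}\to\mathbf{0}$, and then run a forward induction to show $\delta\mathbf{u}_t,\delta\mathbf{x}_{t+1},\delta\btheta\to\mathbf{0}$.

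Where you go beyond the paper is in two places, and both are genuine improvements. First, the paper simply asserts the existence of a fixed $\epsilon_2>0$ for which the descent bound $\Delta\mathcal{J}^{(k)}\le-\tfrac12\epsilon(\sum_t\lambda_t+\psi)$ holds, effectively taking $\epsilon^{(k)}$ bounded away from zero without justification; you correctly flag this as the main obstacle and outline the right argument (uniform $O(\epsilon^3)$ model error on the compact sublevel set, so backtracking terminates at some $\epsilon_{\min}>0$). Second, the paper stops once $\delta\mathbf{u}_t,\delta\btheta\to\mathbf{0}$ and declares $\mathbf{U}^{(k)}\to\mathbf{U}^*$, but never verifies that the limit is actually a stationary point of $\mathcal{J}$; your identification of $Q^u_t$ with $\nabla_{\mathbf{u}_t}\mathcal{J}$ via the collapse of $V^x_t$ onto the costate $\eta_t$ of \cref{lemma/cost_fn_gradient} (backward induction from $t=T+1$ once all $Q^u_s\to 0$) fills that gap and is not present in the paper. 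Your closing caveat about accumulation points versus convergence of the full sequence is also well placed; the paper makes the same leap from ``updates vanish'' to ``iterates converge'' without comment.
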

\begin{proof}
    See \cref{proof/pddp_converges}.
\end{proof}

\subsection{Algorithm Design}
\label{subsec/alg_design}

The convergence result presented in \cref{subsec/convergence} assumes that the Hessian matrices $Q^{uu}_t$ and $V^{\theta\theta}_1$ remain positive definite throughout the optimization process. One method of ensuring this condition always holds is to use Levenberg-Marquardt regularization corresponding to adding a quadratic cost around the nominal trajectory \cite{todorov2005generalized}. This regularization can be employed to ensure $V^{\theta\theta}_1$ is positive definite since this Hessian matrix only needs to be invertible at the first timestep. However, the authors of \cite{tassa2012synthesis} have shown a more robust approach for regularizing $Q^{uu}_t$ places the regularization on the states rather than the controls, ensuring the control perturbation does not have different effects at different timesteps. Therefore, the regularized derivatives can be computed using
\begin{equation} \label{eq/regularized_derivatives}
    \begin{split}
		\tilde{V}^{xx}_{t + 1} & = V^{xx}_{t + 1} + \mu \mathbf{I} \\
        \tilde{V}^{\theta\theta}_1 & = V^{\theta\theta}_1 + \nu \mathbf{I}
    ,\end{split}
\end{equation}
with hyperparameters $\mu, \nu \geq 0$. The regularized Hessian $\tilde{V}^{xx}_{t + 1}$ is used in place of the original during the backward pass calculation of \cref{eq/parameterized_Q_derivatives}. The regularization of $V^{xx}_{t + 1}$ and $V^{\theta\theta}_1$ correspond to adding a quadratic cost to the deviation of the state and parameters from the nominal values, thus ensuring the update step does not stray too far from the region where the quadratic approximation is accurate. However, the addition of the regularization on $V^{xx}_{t + 1}$ when calculating the derivatives in \cref{eq/parameterized_value_fn_derivatives} means the same cancellations of $Q^{uu}_t$ and its inverse are not possible. The updated derivatives are given as
\begin{equation} \label{eq/full_value_fn_derivatives}
	\begin{split}
		V^x_t & = Q^{x}_t + \mathbf{K}_t^\top Q^{u}_t + Q^{xu}_t \mathbf{k}_t + \mathbf{K}_t^\top Q^{uu}_t  \mathbf{k}_t, \\
		V^\theta_t & = Q^\theta_t +  \mathbf{M}^\top_t Q^u_t + Q^{\theta u}_t  \mathbf{k}_t +  \mathbf{M}^\top_t Q^{uu}_t  \mathbf{k}_t, \\
		V^{xx}_t & = Q^{xx}_t + Q^{xu}_t  \mathbf{K}_t +  \mathbf{K}_t^\top Q^{ux}_t +  \mathbf{K}_t^\top Q^{uu}_t  \mathbf{K}_t, \\
		V^{x\theta}_t & = Q^{x\theta}_t + Q^{xu}_t  \mathbf{M}_t +  \mathbf{K}_t^\top Q^{u\theta}_t +  \mathbf{K}_t^\top Q^{uu}_t  \mathbf{M}_t \\
		V^{\theta\theta}_t & = Q^{\theta\theta}_t + Q^{\theta u }_t \mathbf{M}_t + \mathbf{M}^\top_t Q^{u \theta}_t + \mathbf{M}^\top_t Q^{uu}_t \mathbf{M}_t.
	\end{split}
\end{equation}

An additional benefit of the proposed method is that the optimization of the controls and parameters can be coupled or decoupled without affecting the convergence. The result in \cref{thm/pddp_converges} assumes both the control and parameter feedforward gains are applied simultaneously, but the same result holds if the gains are applied in an alternating fashion, e.g. at every odd iteration the controls are updated by applying the control feedforward gain $\mathbf{k}_t$ and at every even iteration the parameters are updated by applying the parameter feedforward gain $\mathbf{m}$. The addition of the feedback term $\mathbf{M}_t \delta \btheta$ on the parameter update when calculating the updated controls in \cref{eq/optimal_control_update} ensures stability in the controls even when only the parameters are being updated. Likewise, \cite{li2020hybrid} adopt an approach where the parameters are only updated once the controls have fully converged. This approach is referred to as \ac{STO-DDP}. The alternating and simultaneous schemes are benchmarked against \ac{STO-DDP} through a numerical comparison on three systems, which is presented in \cref{subsec/sto_experiments}.

In summary, \cref{alg/pddp} describes the proposed \ac{PDDP} algorithm.

\begin{algorithm}
    \caption{Parameterized Differential Dynamic Programming}\label{alg/pddp}
    \KwIn{Nominal trajectory $\bar{\mathbf{x}}_t, \bar{\mathbf{u}}_t$ and parameters $\bar{\btheta}$}
    \While{not converged}{
        Calculate derivatives of $\cal{L}$, $\phi$ along $\bar{\mathbf{x}}_t, \bar{\mathbf{u}}_t, \bar{\btheta}$ \\
        \tcp{Backward pass}
        $V^0_{T + 1} \gets \phi^0_{T + 1}$, $V^x_{T + 1} \gets \phi^x_{T + 1}$, $V^\theta_{T + 1} \gets \phi^\theta_{T + 1}$, \\
        $V^{xx}_{T + 1} \gets \phi^{xx}_{T + 1}$, $V^{x\theta}_{T + 1} \gets \phi^{x\theta}_{T + 1}$, $V^{\theta\theta}_{T + 1} \gets \phi^{\theta\theta}_{T + 1}$ \\
        \For{$t = T, \ldots, 1$}{
            Calculate derivatives of $Q$ according to \cref{eq/parameterized_Q_derivatives} and \cref{eq/regularized_derivatives} \\
            Calculate gains $\mathbf{k}_t, \mathbf{K}_t, \mathbf{M}_t$ according to \cref{eq/parameterized_gains} \\
            Calculate derivatives of $V$ according to \cref{eq/full_value_fn_derivatives}
        }
        \tcp{End backward pass}
        Calculate gain $\mathbf{m}$ according to \cref{eq/parameter_update_gain} \\
        \tcp{Line search}
        $\epsilon \gets 1$ \\
        \While{cost decrease not sufficient}{
            \tcp{Forward pass}
            \uIf{update parameters}{
                $\delta \btheta \gets \epsilon \mathbf{m}$ \\
            }
            \Else{
                $\delta \btheta \gets \mathbf{0}$
            }
            $\btheta \gets \bar{\btheta} + \delta \btheta$ \\
            $\mathbf{x}_1 \gets \bar{\mathbf{x}}_1$ \\
            \For{$t = 1, \ldots, T$}{
                $\delta \mathbf{x}_t \gets \mathbf{x}_t - \bar{\mathbf{x}}_t$ \\
                \uIf{update controls}{
                    $\delta \mathbf{u}_t \gets \epsilon \mathbf{k}_t + \mathbf{K}_t \delta \mathbf{x}_t + \mathbf{M}_t \delta \btheta$
                }
                \Else{
                    $\delta \mathbf{u}_t \gets \mathbf{K}_t \delta \mathbf{x}_t + \mathbf{M}_t \delta \btheta$
                }
                $\mathbf{u}_t \gets \bar{\mathbf{u}}_t + \delta \mathbf{u}_t$ \\
                $\mathbf{x}_{t + 1} \gets \mathbf{F}(\mathbf{x}_t, \mathbf{u}_t; \btheta)$
            }
            \tcp{End forward pass}
            $\epsilon \gets \rho \epsilon$ \quad \tcp{Reduce $\epsilon$}
        }
        \tcp{End line search}
        $\bar{\mathbf{x}}_t \gets \mathbf{x}_t$, $\bar{\mathbf{u}}_t \gets \mathbf{u}_t$, $\bar{\btheta} \gets \btheta$
    }
    \Return{optimal controls $\mathbf{u}_t$, optimal parameters $\btheta$}
\end{algorithm}

\section{Applications}
\label{sec/applications}

In this section the proposed \ac{PDDP} algorithm is applied to two important robotics control tasks: parameter estimation and switching time optimization.

\subsection{Parameter Estimation}
\label{subsec/parameter_est}
Estimating the unknown parameters and states of a dynamical system can be achieved through \acf{MHE}. This framework reformulates the state estimation problem as an optimization problem dual to \ac{MPC} \cite{Diehl2009}. This work assumes full state observability, with the realized discrete-time dynamics given by
\begin{equation*}
\begin{split}
    \mathbf{x}_{t + 1} & = \mathbf{F}(\mathbf{x}_t, \mathbf{u}_t; \btheta^*) + \mathbf{w}_t , \\
    \mathbf{x}_1 & \sim \mathcal{N}(\hat{\mathbf{x}}_1, \boldsymbol{\Sigma}_{\mathbf{x}_1}) ,
\end{split}
\end{equation*}
where $\mathbf{w}_t \sim \mathcal{N}(\mathbf{0}, \boldsymbol{\Sigma}_w)$ is some unknown additive Gaussian process noise. If the controls are given, this corresponds to a trajectory smoothing problem where the goal is to identify the parameters $\btheta$ and the realized disturbances $\mathbf{w}_t$ that maximize the likelihood of the observed states. The objective can be formulated through \acf{MLE} by minimizing the negative log-likelihood
\begin{align*}
    \mathcal{J}_{\text{est}} (\btheta, \mathbf{x}_1) & = - \log p(\mathbf{x}_1, \btheta) \prod_{t = 1}^{T} p(\mathbf{x}_{t + 1} | \mathbf{x}_{t}, \btheta) \\
    & = - \log p(\mathbf{x}_1, \btheta) - \sum_{t = 1}^{T} \log p(\mathbf{x}_{t + 1} | \mathbf{x}_{t}, \btheta)
,\end{align*}
with $p(\mathbf{x}_{t + 1} | \mathbf{x}_{t}, \btheta)$ describing the likelihood of observing the next state $\mathbf{x}_{t + 1}$ given the current state $\mathbf{x}_{t}$ and parameters $\btheta$, and $p(\mathbf{x}_1, \btheta)$ describing the prior distribution over the initial state and parameters.

Assuming a Gaussian prior over the parameters $\btheta \sim \mathcal{N}(\hat{\btheta}, \boldsymbol{\Sigma}_\theta)$, the estimation cost takes the form
\begin{align}
    \begin{aligned}
        \mathcal{J}_{\text{est}} (\btheta, \mathbf{x}_1) & = \frac{1}{2}  \sum_{t = 1}^{T_\text{est}} \norm{\mathbf{x}_{t + 1} - \mathbf{F}(\mathbf{x}_t, \mathbf{u}_t; \btheta)}_{\boldsymbol{\Sigma}_{w}^{-1}}^2 \\
        & \qquad + \frac{1}{2} \lVert \btheta - \hat{\btheta} \rVert_{\boldsymbol{\Sigma}_{\theta}^{-1}}^2 + \frac{1}{2} \norm{\mathbf{x}_1 - \hat{\mathbf{x}}_1}_{\boldsymbol{\Sigma}_{\mathbf{x}_1}^{-1}}^2 .
    \end{aligned} \label{eq/estimation_cost}
\end{align}

The terms $\lVert \btheta - \hat{\btheta} \rVert_{\boldsymbol{\Sigma}_{\theta}^{-1}}^2$ and $\norm{\mathbf{x}_1 - \hat{\mathbf{x}}_1}_{\boldsymbol{\Sigma}_{\mathbf{x}_1}^{-1}}^2$ ensure the new estimates of the parameters and initial state do not deviate too far from the prior, while $\norm{\mathbf{x}_{t + 1} - \mathbf{F}(\mathbf{x}_t, \mathbf{u}_t; \btheta)}_{\boldsymbol{\Sigma}_{w}^{-1}}^2$ ensures the predicted state trajectory matches the actual observed states. This cost function setup allows \ac{PDDP} to be used to identify the true system parameters as well as an updated estimate of the true state. The observed state trajectory is assumed to be given, and the parameters are updated to match the model predictions to the observed states through minimization of \cref{eq/estimation_cost}.

The time horizon $T_\text{est}$ corresponds to the estimation horizon, i.e. the number of steps in the history to estimate. In practice, the horizon is shifted forward every step when the number of observed states exceeds the time horizon $T_\text{est}$ to maintain computational tractability, particularly in realtime applications. This approach is why \ac{MHE} is often referred to as the dual problem to \ac{MPC} \cite{Diehl2009}, as the same receding horizon approach is adopted to solve realtime tasks using solvers such as \ac{DDP}~\cite{tassa2012synthesis}. In the \ac{MPC} problem, the goal is to find the sequence of controls that minimizes a finite-horizon cost function of the form
\begin{align}
    \mathcal{J}_{\text{mpc}} (\mathbf{U}) & = \sum_{t = 1}^{T_{\text{mpc}}} \mathcal{L}(\mathbf{x}_t, \mathbf{u}_t) + \phi(\mathbf{x}_{T_{\text{mpc}} + 1})
.\end{align}

While \ac{MHE} optimizes over the past horizon $T_\text{est}$ up to the current state at time $t$, \ac{MPC} plans over the future time horizon from the current time $t$ up to $T_\text{mpc}$.
In both methods, as a new state observation is made, the time horizon is shifted forward one step and the optimization proceeds again.
A combined cost can be derived to optimize over both tasks simultaneously using \ac{PDDP}:
\begin{align}
    \mathcal{J}_{\text{combined}}(\mathbf{U}; \btheta, \mathbf{x}_1) & = \mathcal{J}_{\text{est}} (\btheta, \mathbf{x}_1) + \mathcal{J}_{\text{mpc}} (\mathbf{U})
.\end{align}
This combined cost function is used in \cref{subsec/est_experiments} to find the optimal parameters of a system while solving an \acs{MPC} task.
At each timestep $t$, \acs{PDDP} minimizes the \acs{MHE} cost over the past time horizon $[t - T_\text{est}, t]$, updating the parameter estimate to best match the observed states. Simultaneously, \acs{PDDP} minimizes the \acs{MPC} cost function, planning over the future time horizon $[t, t + T_\text{mpc}]$ and generating a control update corresponding to standard \acs{DDP} plus feedback based on the simultaneous update of the parameters. This approach can be interpreted as a variation of adaptive \ac{MPC} \cite{adetola2009adaptive} since the system parameters are adapted to fit the true model at every step of the \ac{MPC} algorithm. The simultaneous optimization is possible due to the explicit feedback gains provided during the derivation of \ac{PDDP}. This generalizes previous work by \cite{kobilarov2015differential}, which uses \ac{DDP} to only solve the \ac{MHE} task.

\subsection{Switching Time Optimization}
\label{subsec/sto}
Switching time optimization (STO) is proposed by the authors of \cite{li2020hybrid} and reformulates the trajectory optimization of hybrid systems into an equivalent objective that is easier to solve using iterative numerical solvers, including \ac{DDP}.

A hybrid system is defined by a sequence of $N$ modes such that for each mode $i = 1, \ldots, N$, the dynamics obey
\begin{align}
    \dot{\mathbf{x}} & = \mathbf{f}_i(\mathbf{x}(t), \mathbf{u}(t)) \label{eq/multimodal_dynamics}
.\end{align}
Example hybrid systems include bipedal robotic animals, which transition between modes of flight and contact with the ground while running, and \ac{UAM} class vehicles, which exhibit modes of vertical takeoff and landing, fixed-wing cruise, and a transition phase between the two.

Given this fixed sequence of modes, the goal in hybrid systems trajectory optimization is to find the optimal control trajectory and the optimal set of times $\mathbf{t} = [t_1, \ldots, t_N]^\top$ that minimize the cost, with each $t_i$ describing the terminal time of mode $i$. This problem can be formulated through the continuous time objective
\begin{align}
    \min_{\mathbf{u}, \mathbf{t}} \sum_{i = 1}^N \left[ \int_{t_{i - 1}}^{t_i} \ell_i(\mathbf{x}(t), \mathbf{u}(t)) \dd{t} + \varphi_i(\mathbf{x}(t_i)) \right] \label{eq/continuous_hybrid_ocp}
,\end{align}
with the dynamics transitioning accordingly through the modes defined in \cref{eq/multimodal_dynamics}.

The authors of \cite{li2020hybrid} show that this objective can be equivalently expressed as trying to find the optimal amount of time to spend in each mode through \ac{STO}. The dynamics in \cref{eq/multimodal_dynamics} and objective in \cref{eq/continuous_hybrid_ocp} are reformulated over fixed time intervals of unit length, with the dynamics and running cost scaled by the amount of time that is spent in each mode. Letting $\theta_i = t_i - t_{i - 1}$ and adopting the change of variable $\tau = (t - t_{i - 1})/\theta_i + i - 1$ results in the time scaled dynamics
\begin{align}
    \dot{\mathbf{x}} & = \theta_i \mathbf{f}_i(\mathbf{x}(\tau), \mathbf{u}(\tau)) \label{eq/time_scaled_dynamics}
,\end{align}
and the objective can be rewritten as
\begin{align}
    \min_{\mathbf{u}, \btheta} \sum_{i = 1}^N \left[ \int_{i - 1}^{i} \theta_i \ell_i(\mathbf{x}(\tau), \mathbf{u}(\tau)) \dd{\tau} + \varphi_i(\mathbf{x}(i)) \right] \label{eq/time_scaled_ocp}
.\end{align}

The equivalent discrete-time optimal control problem is given as
\begin{subequations} \label{eq/discrete_sto_ocp}
    \begin{align}
        & \min_{\mathbf{U}, \mathbf{t}} \sum_{i = 1}^N \left[ \phi_i(\mathbf{x}_{T_i + 1}) + \sum_{t = T_{i - 1} + 1}^{T_i} \theta_i \mathcal{L}_i(\mathbf{x}_t, \mathbf{u}_t) \right] \\
        & \text{subject to } \mathbf{x}_{t + 1} = \mathbf{F}_i(\mathbf{x}_t, \mathbf{u}_t; \theta_i) \label{subeq/discrete_time_scaled_dynamics}
    ,\end{align}
\end{subequations}
with a chosen integration step size $h$ and $T_i = T_{i - 1} + \theta_i / h$, $T_0 = 0$ denoting the number of timesteps in the horizon up to the end of mode $i$. This objective is a parameterized discrete-time optimal control problem that can be solved using \ac{PDDP}. In this problem, the optimized parameters are the intervals of time spent in each mode $\theta_i$. For a single mode, this objective is an approximation to a free-horizon optimal control problem, where the terminal time of the trajectory parameterizes the dynamics and can be optimized using \ac{PDDP}. Similar parameterizations have been discussed in previous works such as \cite{liu2021second, stachowicz2021optimal}.

Three key improvements are introduced to stabilize the optimization using a numerical solver such as \ac{PDDP}. First, the number of timesteps per mode is set to be constant, meaning timesteps do not have to be interpolated when the amount of time in a mode changes during the optimization. This improvement ensures that the convergence properties described in \cref{subsec/convergence} hold, as changing the number of timesteps in the optimization modifies the objective, which can cause unexpected problems and results in poor convergence.

Second, an improved integration scheme is adopted for calculating the discrete-time dynamics. Using the typical forward Euler integration with fixed step size $h$, the discrete-time dynamics from \cref{subeq/discrete_time_scaled_dynamics} are given as
\begin{align}
    \mathbf{x}_{t + 1} = \mathbf{x}_t + \theta_i \mathbf{f}_i(\mathbf{x}_t, \mathbf{u}_t) h
.\end{align}
Note the time scaling in the dynamics by the amount of time spent in each mode $\theta_i$ results in an equivalent scaling of the integration step size $h$ used in the forward Euler integration, which causes inaccuracies in the dynamics, particularly as the time scale $\theta_i$ becomes large. To remedy this issue, the dynamics are integrated at a smaller underlying step size $\Delta t$ for multiple substeps, which prevents numerical errors in the dynamics causing the optimization to fail.

Finally, constraints are necessary in order to ensure the time scales are always nonnegative. A popular choice to handle box control limits is the projected Newton quadratic program solver proposed in \cite{tassa2014control}. The same projected Newton solver is used when calculating the parameter feedforward gain $\mathbf{m}$ to ensure that the time scales never become negative.

\section{Experiments}
\label{sec/experiments}

In this section, \ac{PDDP} is applied to two separate tasks: optimal parameter estimation and switching time optimization. The results show \ac{PDDP} can solve \ac{MHE} and \ac{MPC} problems simultaneously, as well as optimize for time-optimal trajectories through \ac{STO}.

\subsection{Parameter Estimation}
\label{subsec/est_experiments}

The dual optimization over the combined \ac{MHE} and \ac{MPC} tasks proposed in \cref{subsec/parameter_est} is solved by \ac{PDDP} on multiple nonlinear, underactuated systems including a cartpole, quadrotor, and ant quadruped.

\subsubsection{Cartpole}
\begin{figure}[h]
    \centering
    \includegraphics[width=0.25\textwidth]{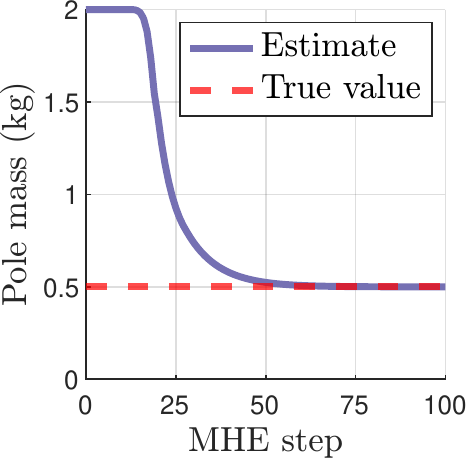}
    \caption{Pole mass estimate for the cartpole system.}
    \label{fig/cp_param_est}
\end{figure}

The cartpole is an underactuated, nonlinear dynamical system with four states and one control. \ac{PDDP} is given a bad initial estimate of the pole mass of \SI{2}{\kilogram} in its dynamics model and is tasked with finding the correct pole mass of \SI{0.5}{\kilogram} while simultaneously bringing the pole to the upright position starting from the downward configuration. The MPC and MHE horizons are chosen to be $T_\text{mpc} = T_\text{est} = 100$ steps long, while the underlying discretization step of the environment is given as $\Delta t = \SI{0.02}{\second}$. The optimization is run for 200 MPC steps.

The convergence of \ac{PDDP} to the true mass is given in \cref{fig/cp_param_est}. \ac{PDDP} makes little progress for the first 15 steps since the dynamics of the pole evolve slowly and the true mass is hard to estimate. Once the pole velocity increases, the mass can be estimated successfully within 50 steps.

\subsubsection{Quadrotor}
\begin{figure*}[h]
    \centering
    \begin{subfigure}[b]{0.25\textwidth}
        \centering
        \includegraphics[width=\textwidth]{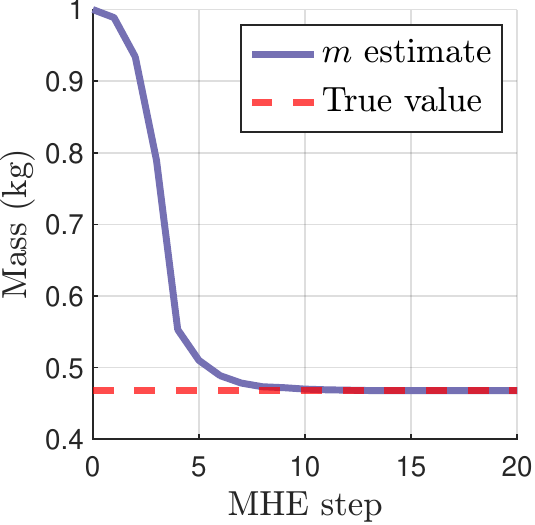}
        \caption{Mass $m$ estimate}
        \label{subfig/quad_param_m}
    \end{subfigure}%
    \begin{subfigure}[b]{0.25\textwidth}
        \centering
        \includegraphics[width=\textwidth]{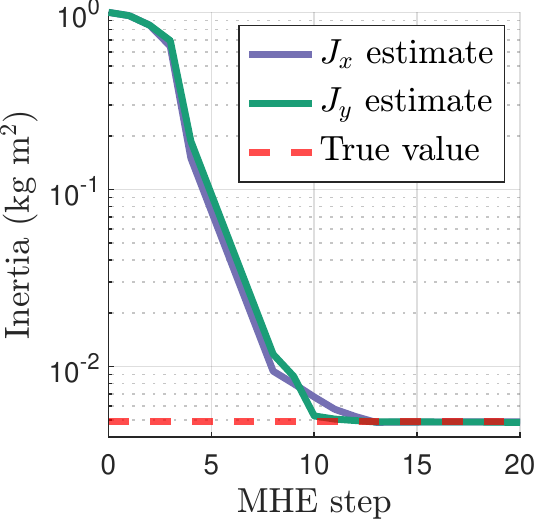}
        \caption{Inertia $J_x$ and $J_y$ estimates}
        \label{subfig/quad_param_J_x_J_y}
    \end{subfigure}%
    \begin{subfigure}[b]{0.25\textwidth}
        \centering
        \includegraphics[width=\textwidth]{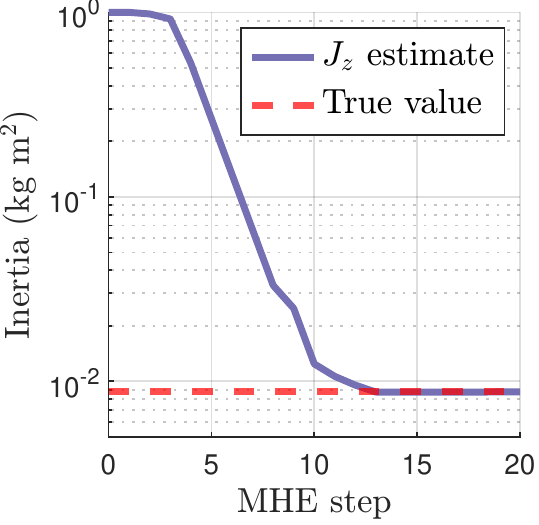}
        \caption{Inertia $J_z$ estimate}
        \label{subfig/quad_param_J_z}
    \end{subfigure}%
    \begin{subfigure}[b]{0.25\textwidth}
        \centering
        \includegraphics[width=\textwidth]{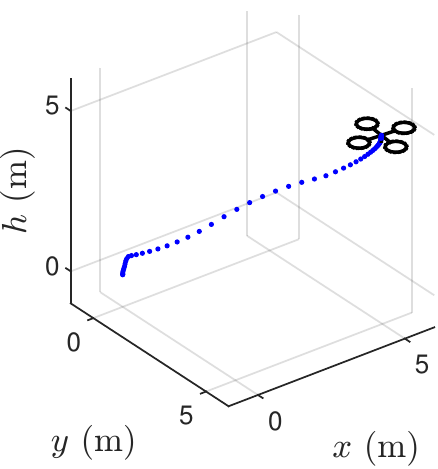}
        \caption{Quadrotor true trajectory}
        \label{subfig/quad_mhe_traj}
    \end{subfigure}
    \caption{(a-c): Parameter estimates for the quadrotor system. \ac{PDDP} converges to the true values within 15 \ac{MHE} steps. (d): The spatial trajectory of the quadrotor as it arrives at the target state. Note the slow progress at the start while the dynamics model is inaccurate.}
    \label{fig/quad_param_est}
\end{figure*}
\begin{figure*}[h]
    \centering
    \begin{subfigure}[b]{0.19\textwidth}
        \centering
        \includegraphics[width=\textwidth]{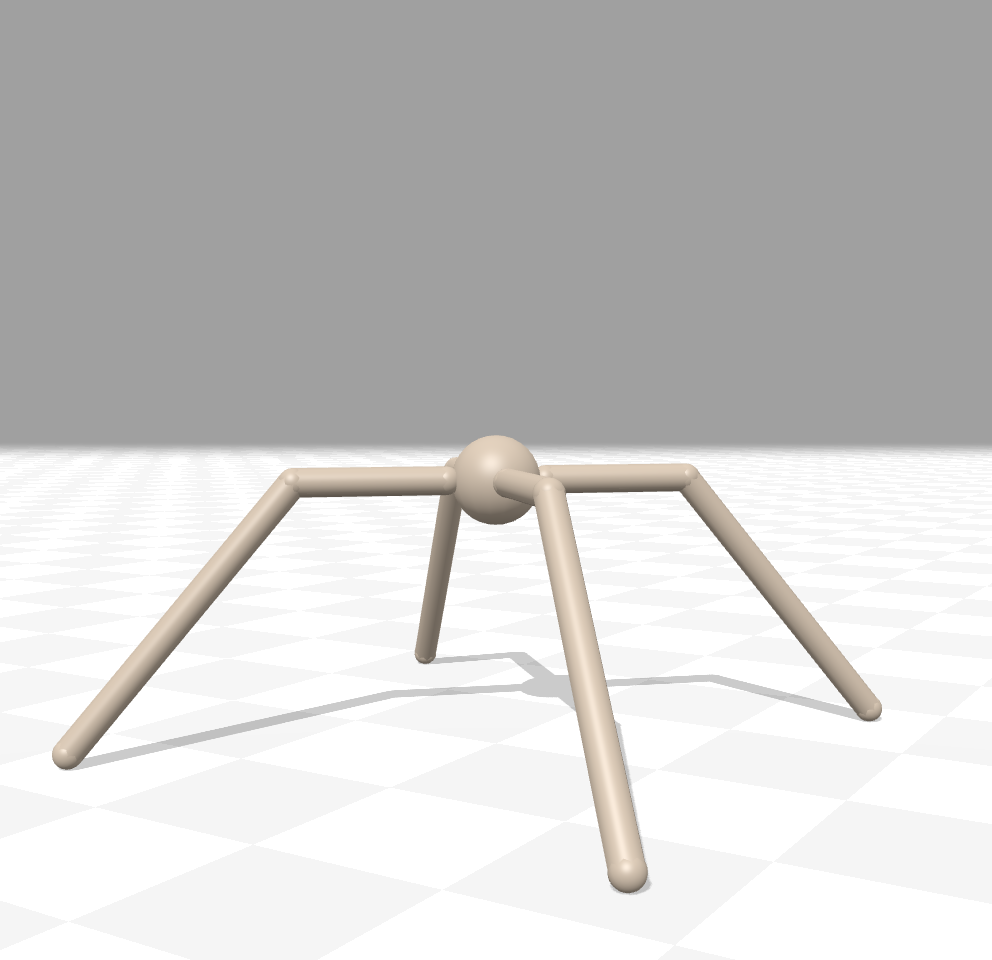}
        \caption{Initial estimate}
        \label{subfig/ant_1}
    \end{subfigure}
    \hfill
    \begin{subfigure}[b]{0.19\textwidth}
        \centering
        \includegraphics[width=\textwidth]{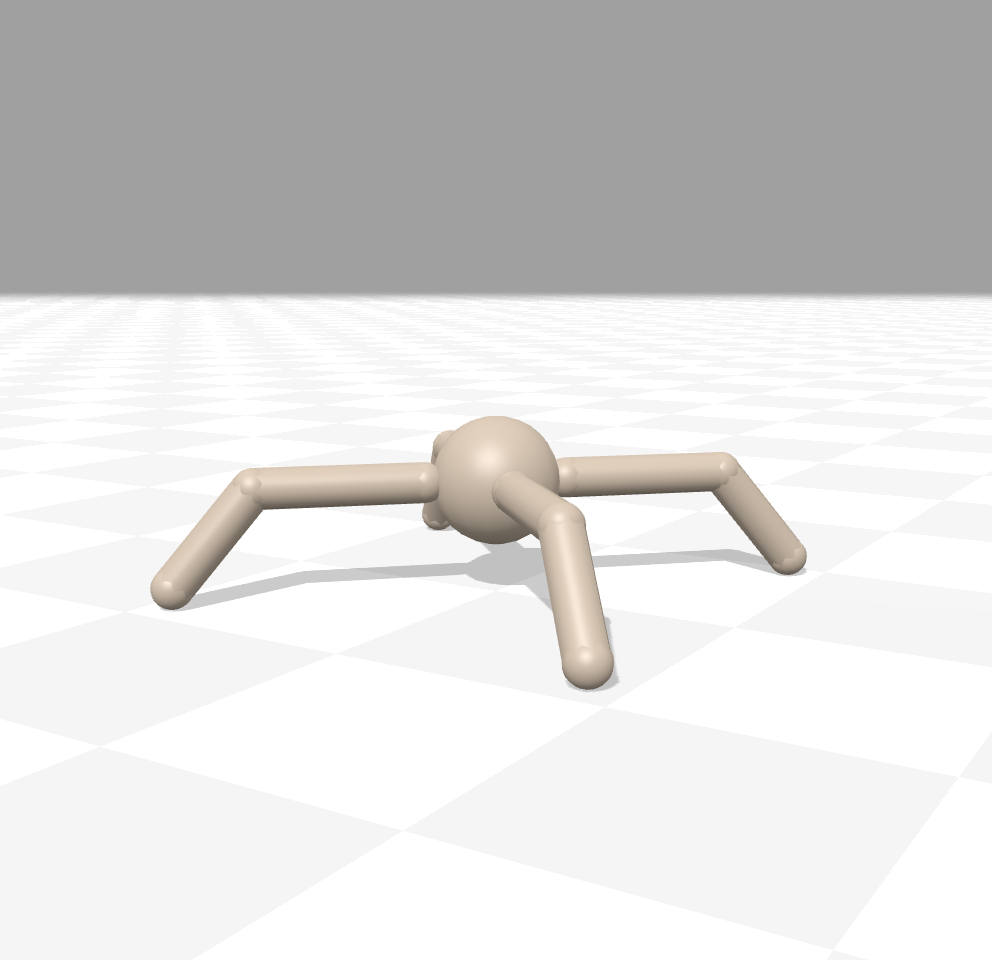}
        \caption{Estimate, \ac{MHE} step 1}
        \label{subfig/ant_2}
    \end{subfigure}
    \hfill
    \begin{subfigure}[b]{0.19\textwidth}
        \centering
        \includegraphics[width=\textwidth]{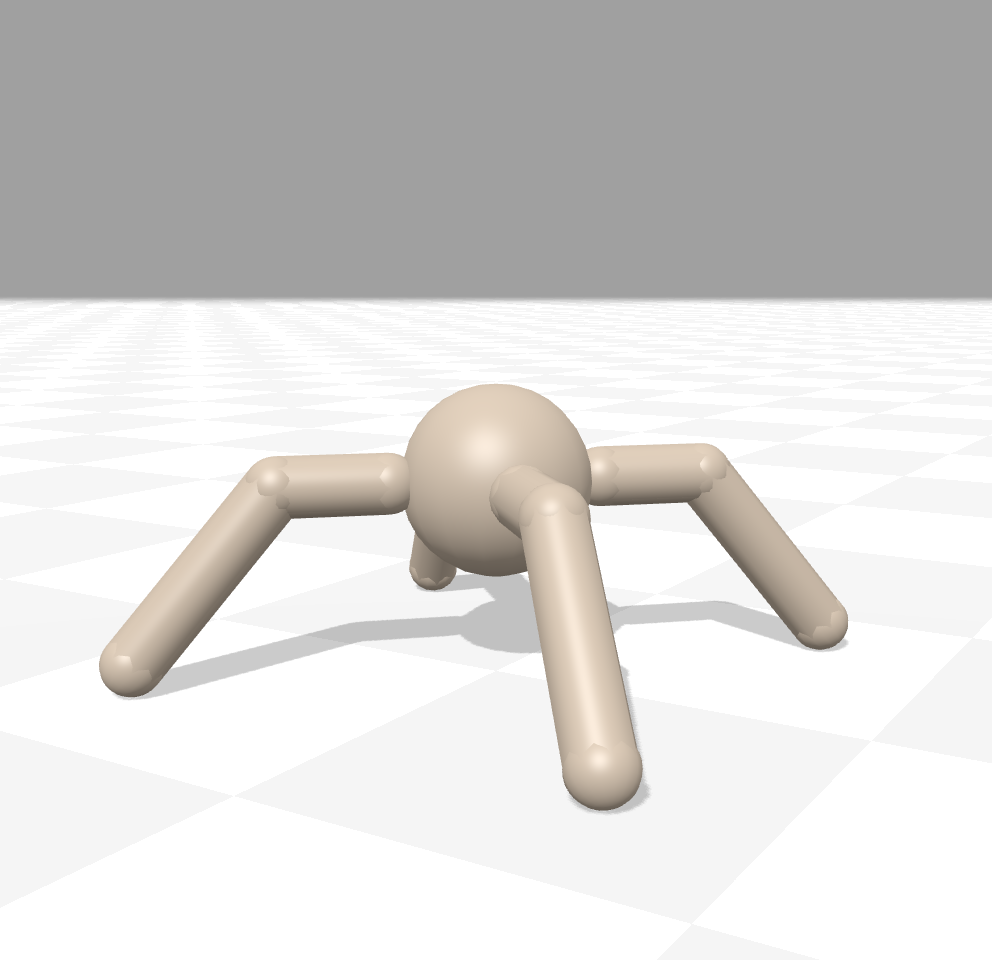}
        \caption{Estimate, \ac{MHE} step 2}
        \label{subfig/ant_3}
    \end{subfigure}%
    \hfill
    \begin{subfigure}[b]{0.2\textwidth}
        \centering
        \includegraphics[width=\textwidth]{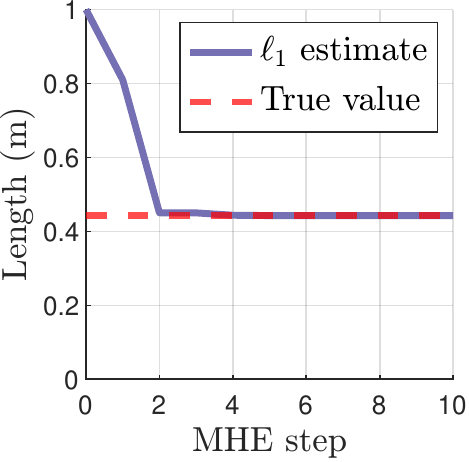}
        \caption{Upper limb length $\ell_1$}
        \label{subfig/ant_param_1}
    \end{subfigure}%
    \begin{subfigure}[b]{0.2\textwidth}
        \centering
        \includegraphics[width=\textwidth]{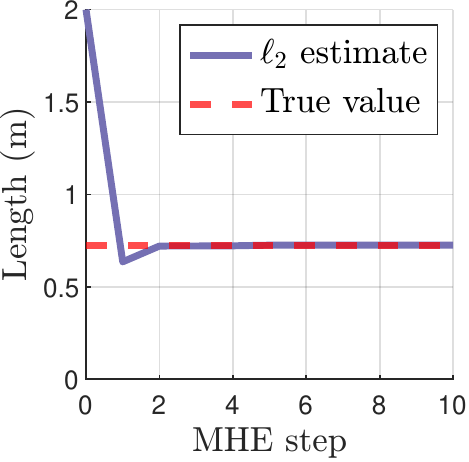}
        \caption{Lower limb length $\ell_2$}
        \label{subfig/ant_param_2}
    \end{subfigure}%
    \caption{(a-c): Leg length estimates for the ant system, rendered using Brax \cite{brax2021github}. The estimate after \ac{MHE} step 2 very closely matches the true model. (d, e): Estimate history. \ac{PDDP} converges to the true values within 4 \ac{MHE} steps.}
    \label{fig/ant_screenshots}
\end{figure*}

The quadrotor has 6 degrees of freedom, resulting in twelve states $\mathbf{x} = [x, y, h, \phi, \theta, \psi, \dot{x}, \dot{y}, \dot{h}, p, q, r]^\top$ and four controls $\mathbf{u} = [ F, \tau_\phi, \tau_\theta, \tau_\psi]^\top$ that correspond to the total thrust produced by the rotors, and the rolling, pitching, and yawing torques, respectively. The full description of the dynamics is given by \cite{beard2008quadrotor}.

\ac{PDDP} is tasked with estimating the mass and diagonal inertia terms of the quadrotor while driving the quadrotor to a target position of (5, 5, 5) \SI{}{\metre} starting from the origin. It is assumed that the quadrotor is symmetric about all three axes, so the off-diagonal terms of the inertia matrix are zero. The initial guesses of the parameters are given as $m = \SI{1}{\kilogram}$, $J_x =$~\SI{1}{\kilogram\square\metre}, $J_y =$~\SI{1}{\kilogram\square\metre}, and $J_z =$~\SI{1}{\kilogram\square\metre}, while the true parameters values are $m =$~\SI{0.468}{\kilogram}, $J_x =$~\SI{4.856e-3}{\kilogram\square\metre}, $J_y =$~\SI{4.856e-3}{\kilogram\square\metre}, and $J_z =$~\SI{8.801e-3}{\kilogram\square\metre}, which are adapted from \cite{ahmed2018sliding}. The initial estimate is very poor, and assumes the rotors are very heavy and/or far from the center of mass of the quadrotor. For this task, the MPC and MHE horizons are chosen as $T_\text{mpc} = T_\text{est} = 100$ steps, and the dynamics is integrated at a step size of $\Delta t = \SI{0.01}{\second}$.

The results are plotted in \cref{fig/quad_param_est}. \ac{PDDP} converges to the true values within 15 \ac{MHE} steps, meaning \ac{PDDP} is able to find the true parameters in under \SI{0.2}{\second} of total execution time. The \ac{MPC} task is then solved using the corrected dynamics model, taking a total of \SI{3}{\second} to arrive at the target state.

\subsubsection{Ant}



The ant is a popular robotics system for continuous control and reinforcement learning. The physics used for this system is adapted from Brax \cite{brax2021github}. This quadruped system is described by nine rigid bodies and has a state dimension of 117 in Brax, 13 states for each rigid body describing their position, quaternion rotation, linear velocity, and angular velocity. The four legs of the ant have two torque-controlled joints each, resulting in a control dimension of 8.

The \ac{MPC} task is to maximize the forward velocity of the robot, requiring periodicity in the motion of the legs to generate the necessary forward force. The \ac{MPC} horizon was chosen as $T_{\text{mpc}} = 100$ steps with an underlying environment discretization step size of $\Delta t = \SI{0.05}{\second}$, resulting in a total planning horizon of \SI{5}{\second}. The \ac{MPC} cost function rewards positive $x$ velocity and penalizes deviation of the $y$ position and total control effort.

\begin{figure*}[h]  
    \centering
    \begin{subfigure}[b]{0.3333\textwidth}
        \centering
        \includegraphics[width=\textwidth]{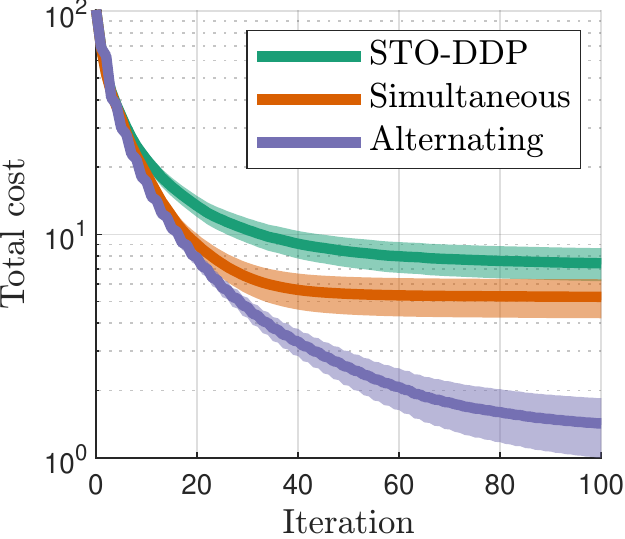}
        \caption{Cartpole}
        \label{subfig/cp_sto}
    \end{subfigure}%
    \begin{subfigure}[b]{0.3333\textwidth}
        \centering
        \includegraphics[width=\textwidth]{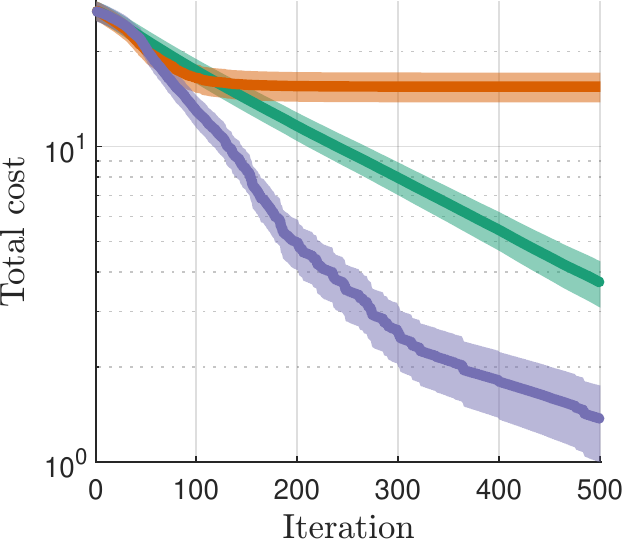}
        \caption{Quadrotor}
        \label{subfig/quad_sto}
    \end{subfigure}%
    \begin{subfigure}[b]{0.3333\textwidth}
        \centering
        \includegraphics[width=\textwidth]{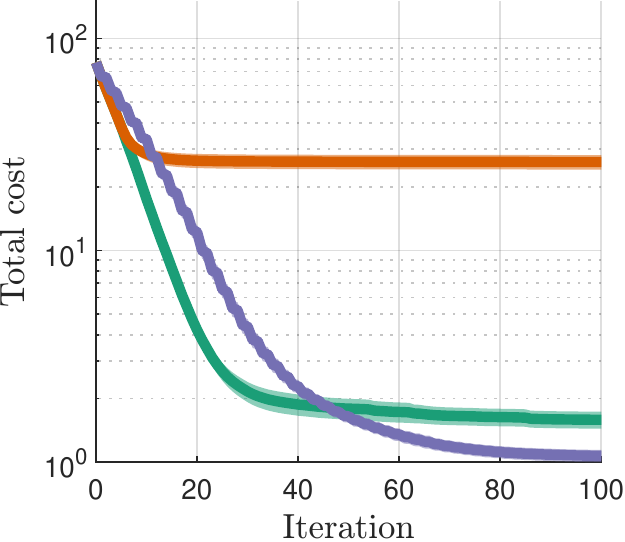}
        \caption{Lift+Cruise}
        \label{subfig/ttc_sto}
    \end{subfigure}
    \caption{Convergence of PDDP on three switching time tasks for different choices of optimization scheme. A normal distribution is fit to the cost at each iteration, with the dark lines corresponding to the mean and the shaded region showing the 95\% confidence interval. To make the comparison clearer, the total costs for each task have been normalized so the minimum cost trajectory corresponds to a cost of 1.}
    \label{fig/sto_conv}
\end{figure*}

The simultaneous \ac{MHE} task is to estimate the length of the eight rigid bodies of the legs of the ant. Symmetry is enforced to prevent instabilities, so the legs lengths are parameterized by the upper limb length $\ell_1$ and the lower limb length $\ell_2$. The true values are $\ell_1 = \SI{0.443}{\metre}$ and $\ell_2 = \SI{0.726}{\metre}$, respectively, with the initial estimate given by $\ell_1 = \SI{1.0}{\metre}$ and $\ell_2 = \SI{2.0}{\metre}$, resulting in vastly different dynamics.

The optimization is run for 250 total timesteps, with the parameter estimate history shown in \cref{subfig/ant_param_1,subfig/ant_param_2}.
\ac{PDDP} converges to the true leg lengths in 4 steps.
While the dynamics model is inaccurate, the ant makes little progress moving forward.
However, once the true parameters are found, \ac{PDDP} is able to successfully drive the ant forward.
The fast convergence to the optimal parameters is in large part due to the choice of parameterization.
Since the limb lengths $\ell_1$ and $\ell_2$ parameterize all four legs of the ant simultaneously, only a small subset of lengths can accurately describe the observed change in the state of all four legs.

\subsection{Switching Time Optimization}
\label{subsec/sto_experiments}

In this section, \ac{PDDP} is used to solve an \ac{STO} task for three systems, finding the minimum time to bring a cartpole and quadrotor to two sequential target orientations, and solving for the optimal transition point between flight regimes for an overactuated \ac{UAM} class vehicle. A numerical comparison is performed to benchmark \ac{STO-DDP} from \cite{li2020hybrid} with the simultaneous and alternating schemes proposed in \cref{subsec/alg_design}. The results are summarized in \cref{fig/sto_conv}.

For each of the following experiments, the underlying environment integration step size is chosen as $\Delta t = \SI{0.01}{\second}$ (see discussion in \cref{subsec/sto}).


\subsubsection{Multi-target cartpole}

\begin{figure}[h]
    \centering
    \includegraphics[width=0.485\textwidth]{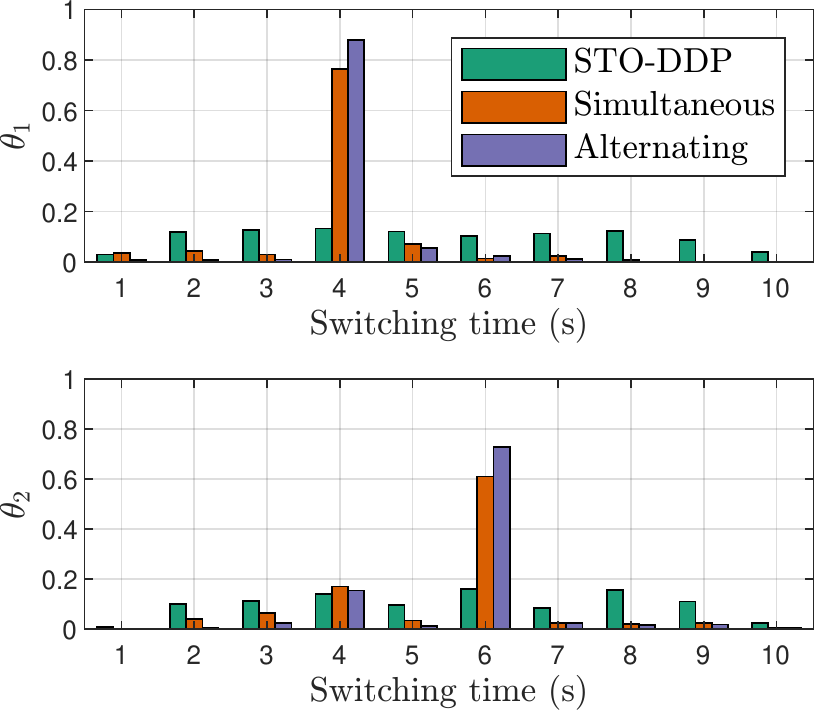}
    \caption{Histogram of optimal time scales $\theta_1$ and $\theta_2$ for the multi-target cartpole task. Bins are centered at each second from 1 up to 10. From \cref{subfig/cp_sto}, the alternating method consistently finds low cost solutions despite different initializations of the switching times, while \ac{STO-DDP} and the simultaneous method converge to suboptimal solutions.}
    \label{fig/cartpole_times}
\end{figure}


Starting from the origin with the pole in the downward position, the goal of the multi-target cartpole problem is to first bring the pole to the upright position at an $x$ position of \SI{-5}{\metre} with zero velocity, and then bring the pole to the upright position at an $x$ position of \SI{5}{\metre} with zero velocity.

In practice, the simultaneous optimization is highly dependent on the initial guess of the nominal control trajectory. Therefore, the method is warm started by only updating the controls for the first 5 iterations, corresponding to standard \ac{DDP}.

\begin{figure*}[h]
    \centering
    \includegraphics[]{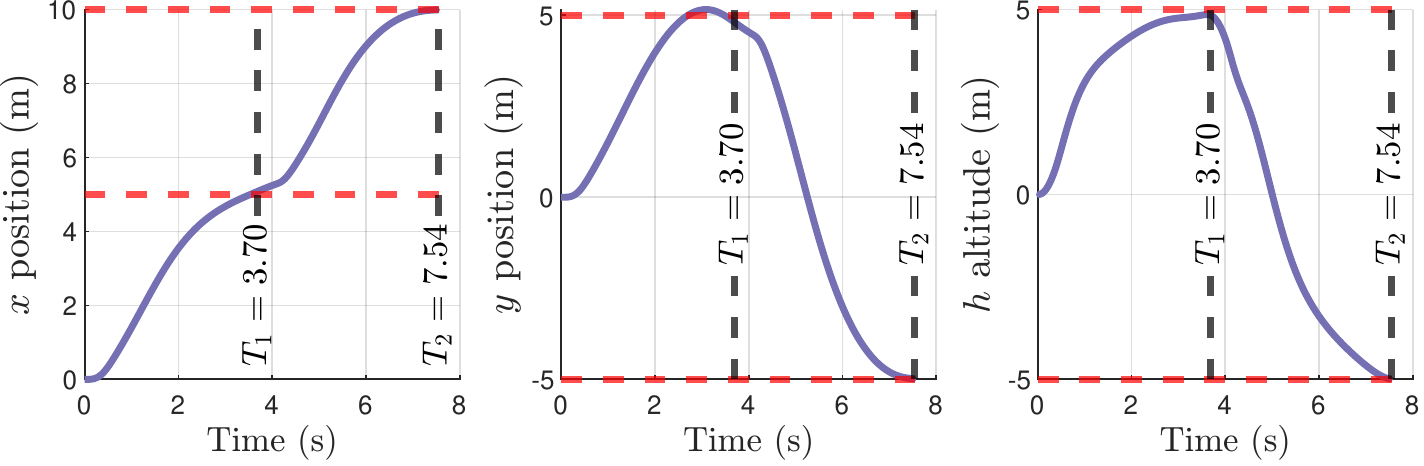}
    \caption{Time-optimal trajectory for quadrotor \ac{STO} task found using \ac{PDDP}. The terminal times of each mode $T_1$ and $T_2$ are plotted as dashed black lines. The target states are plotted as dashed red lines.}
    \label{fig/quad_sto_traj}
\end{figure*}

For each optimization scheme, \ac{PDDP} was run for 1000 initial guesses of the time scales $\theta_1$ and $\theta_2$, each sampled uniformly from the interval $[1, 10]$. The convergence behavior is plotted in \cref{subfig/cp_sto}. The alternating scheme converges to a solution half an order of magnitude lower in cost on average than both the simultaneous scheme and the \ac{STO-DDP} approach that waits for the controls to converge. Note that the simultaneous scheme reduces the cost quicker than waiting for the controls to converge, but the final solutions are comparable. Overall, the alternating scheme vastly outperforms the others on average. This success is attributed to the fact that alternating between updating the controls and updating the parameters allows the solution to more effectively escape local minima throughout the optimization process. \cref{fig/cartpole_times} shows a histogram of the optimal time scales for each of the different optimization schemes. The simultaneous and alternating scheme converge to a solution of $\theta_1 = \SI{4}{\second}$ and $\theta_2 = \SI{6}{\second}$ in the majority of cases. Waiting for the controls to converge in \ac{STO-DDP} results in a fairly flat distribution, suggesting the method quickly falls into a suboptimal solution when analyzed together with \cref{subfig/cp_sto}. Likewise, the simultaneous method converges to similar switching times as the alternating method, but does not find an optimal control solution, describing the poor convergence in \cref{subfig/cp_sto}. The alternating scheme is able to consistently converge in both switching times and controls, resulting in lower cost solutions.

\subsubsection{Quadrotor}

For the quadrotor \ac{STO} task, starting from the origin, the goal is to reach a first target position of $(5, 5, 5)$~\SI{}{\metre} with zero velocity and then to reach a second target of $(10, -5, -5)$~\SI{}{\metre} with zero velocity while optimizing for the amount of time to reach both targets.
The \ac{PDDP} algorithm was run until convergence for 1000 initial guesses of the switching times, sampled uniformly from the range [1, 10].
The convergence behavior is plotted in \cref{subfig/quad_sto}.
The simultaneous scheme quickly falls into local minima, while waiting for the controls to converge in \ac{STO-DDP} results in linear convergence.
The alternating scheme outperforms \ac{STO-DDP}.
The minimum cost trajectory is shown in \cref{fig/quad_sto_traj}.

\subsubsection{NASA Lift+Cruise vehicle}
\label{subsubsec/lpc}

\begin{figure*}[h]
    \centering
    \hfill
    \begin{subfigure}[b]{0.25\textwidth}
        \centering
        \includegraphics[width=\textwidth]{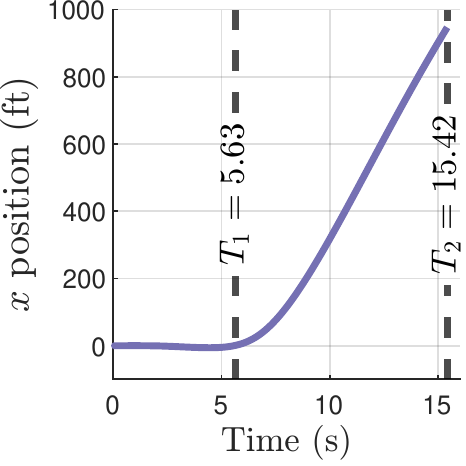}
        \caption{$x$ position trajectory.}
        \label{subfig/ttc_sto_x_traj}
    \end{subfigure}
    \begin{subfigure}[b]{0.25\textwidth}
        \centering
        \includegraphics[width=\textwidth]{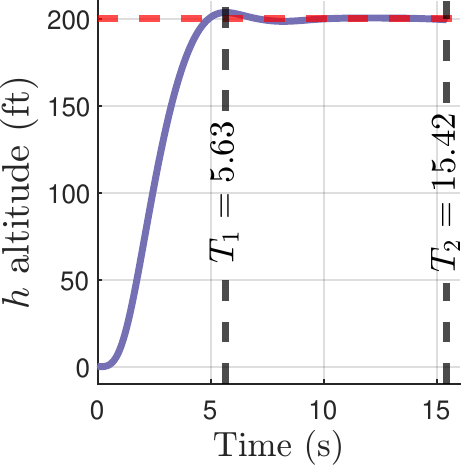}
        \caption{$h$ altitude trajectory.}
        \label{subfig/ttc_sto_h_traj}
    \end{subfigure}
    \hfill
    \begin{subfigure}[b]{0.35\textwidth}
        \centering
        \includegraphics[width=\textwidth]{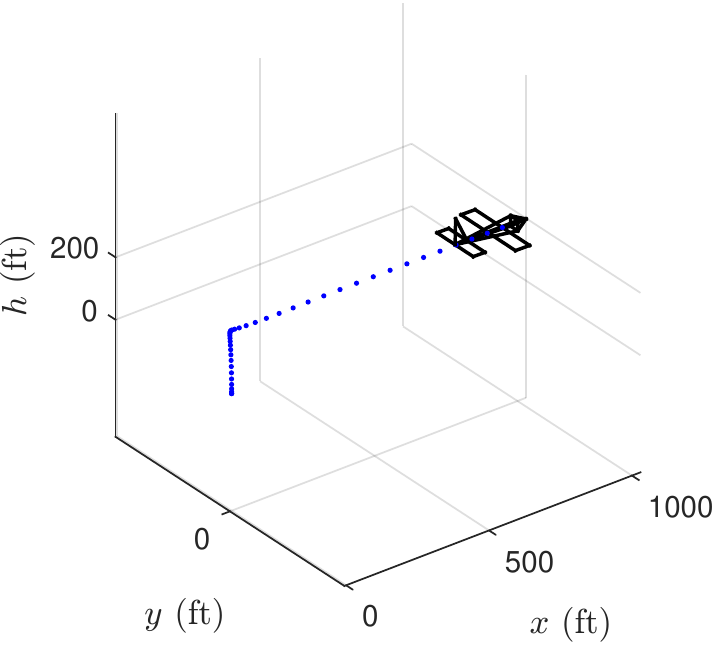}
        \caption{Spatial trajectory.}
        \label{subfig/ttc_sto_spatial_traj}
    \end{subfigure}
    \hfill
    \caption{Time-optimal vertical takeoff to cruise maneuver for the Lift+Cruise vehicle found using \ac{PDDP}. The terminal times of each mode $T_1$ and $T_2$ are plotted as dashed black lines. The target altitude is plotted as a dashed red line. \ac{PDDP} increases the altitude up to the target state during the first mode, then increases the forward velocity while maintaining altitude during the second mode.}
    \label{fig/ttc_sto}
\end{figure*}
\begin{figure*}[h]
    \centering
    \hfill
    \begin{subfigure}[b]{0.25\textwidth}
        \centering
        \includegraphics[width=\textwidth]{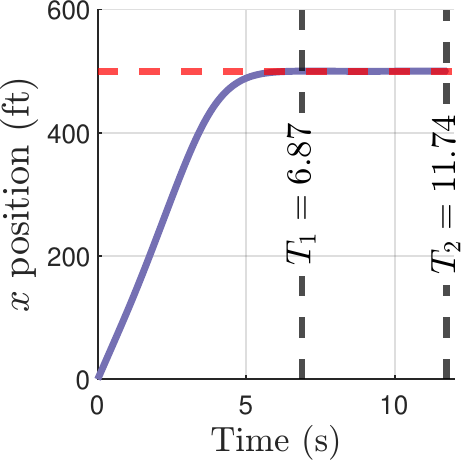}
        \caption{$x$ position trajectory.}
        \label{subfig/chl_sto_x_traj}
    \end{subfigure}
    \begin{subfigure}[b]{0.25\textwidth}
        \centering
        \includegraphics[width=\textwidth]{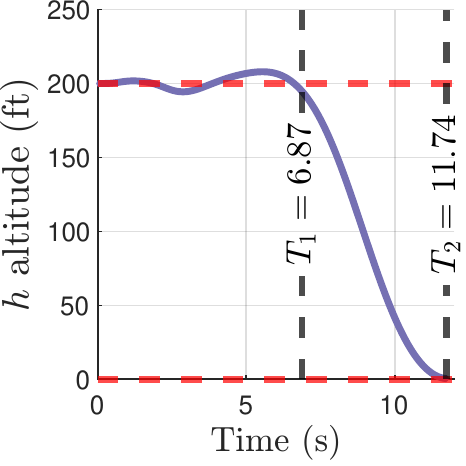}
        \caption{$h$ altitude trajectory.}
        \label{subfig/chl_sto_h_traj}
    \end{subfigure}
    \hfill
    \begin{subfigure}[b]{0.35\textwidth}
        \centering
        \includegraphics[width=\textwidth]{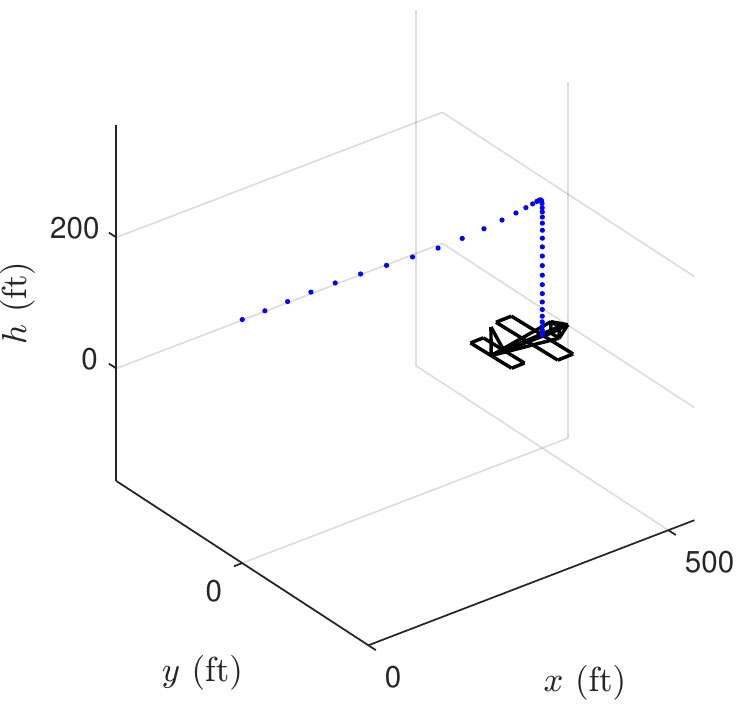}
        \caption{Spatial trajectory.}
        \label{subfig/chl_sto_spatial_traj}
    \end{subfigure}
    \hfill
    \caption{Time-optimal cruise to hover to vertical landing maneuver for the Lift+Cruise vehicle found using \ac{PDDP}. The terminal times of each mode $T_1$ and $T_2$ are plotted as dashed black lines. The target states are plotted as dashed red lines. \ac{PDDP} decreases the forward velocity of the aircraft by pitching back, then initiates a vertical landing.}
    \label{fig/chl_sto}
\end{figure*}
The NASA Lift+Cruise vehicle is a \ac{VTOL} aircraft that operates as a fixed-wing aircraft in forward flight, but has access to eight lifting rotors allowing for \ac{VTOL} capabilities.
While optimal control algorithms, including \ac{DDP}, have been applied to these vehicles in the past \cite{houghton2022path}, to the authors' best knowledge this work is the first application of switching time optimization applied to changing flight regimes within trajectory planning.
The \ac{STO} task performed here is a vertical takeoff followed by a partial transition from hover into cruise.
\ac{PDDP} was run for 500 initial guesses of the switching times sampled uniformly from [5,~15] seconds, with the convergence results of each optimization scheme plotted in \cref{subfig/ttc_sto}.
On this complex task, the simultaneous scheme converges to a poor solution.
Meanwhile, the alternating scheme and \ac{STO-DDP} result in comparable performance, with \ac{STO-DDP} beating the alternating scheme for the first 50 iterations.
The alternating scheme makes little change to the switching times while the control solution is still suboptimal during the first 50 iterations, but as the controls converge to a solution that is better able to solve the problem, the alternating scheme is able to successfully find a low cost solution for the switching times.
A representative time-optimal trajectory is shown in \cref{fig/ttc_sto}. The aircraft requires \SI{5.63}{\second} to perform the vertical takeoff maneuver and reach the desired altitude, then takes \SI{9.79}{\second} to increase its forward velocity to the desired target speed.

To further demonstrate the planning capabilities of \ac{PDDP}, the reverse partial transition maneuver was tested. This task requires the aircraft to transition from cruise to a hover configuration by bringing its forward velocity to zero within \SI{500}{ft}, and then perform a vertical landing. The solved trajectory is shown in \cref{fig/chl_sto}. This particular aircraft model does not have access to flaps or a speed brake to decelerate, and thus reduces its forward velocity by pitching back, pointing the body $z$-axis forwards to generate the backwards thrust necessary to reduce the aircraft's speed. This cruise-to-hover transition is accomplished within \SI{6.87}{\second}. The aircraft then proceeds to lower its altitude down to the target, requiring \SI{4.87}{\second} to complete the landing.

\section{Conclusions and Future Work}
\label{sec/conclusion}
This paper has derived a method generalizing previous work for solving problems with time-invariant parameters using \ac{DDP}. The proposed \ac{PDDP} algorithm is based on a general parameterized optimal control objective and allows direct optimization over time-invariant parameters with theoretical convergence guarantees. \ac{PDDP} was applied to multiple robotics systems through adaptive \ac{MPC} using \ac{MHE} and hybrid systems optimization via \ac{STO}. In particular, \ac{PDDP} is able to identify the optimal transition point between flight regimes for a \ac{UAM} class vehicle exhibiting complex transition dynamics. Various optimization schemes were analyzed and an alternating approach between updating the controls and parameters was shown to converge to a better solution by effectively escaping local minima throughout the optimization process.

In terms of runtime, a purely CPU-based MATLAB implementation of \cref{alg/pddp} with limited parallelization runs the NASA Lift+Cruise \ac{STO} task presented in \cref{subsubsec/lpc} in roughly \SI{1.5}{\second} per iteration (including the line search) on an Intel i7-9850H \SI{2.60}{\giga\hertz} processor. The runtime is dominated by the complex dynamics calculations of the NASA Lift+Cruise vehicle, especially the dynamics Jacobians calculations. The other tasks run in under a second per iteration based on the complexity of the dynamics. This is fairly slow for an MPC algorithm, and thus improvements can be made, such as improved parallelization~\cite{plancher2018performance}, in order to make the algorithm more suitable for realtime \ac{MPC} tasks.

Additionally, as described in \cite{liu2021second}, \ac{DDP} can be used as a second-order optimizer for speeding up the training process of neural networks. The addition of parameter-based optimization into this approach can help improve the training performance of modern machine learning methodologies.



\bibliographystyle{plainnat}
\bibliography{references}

\begin{thebibliography}{29}
\providecommand{\natexlab}[1]{#1}
\providecommand{\url}[1]{\texttt{#1}}
\expandafter\ifx\csname urlstyle\endcsname\relax
  \providecommand{\doi}[1]{doi: #1}\else
  \providecommand{\doi}{doi: \begingroup \urlstyle{rm}\Url}\fi

\bibitem[Adetola et~al.(2009)Adetola, DeHaan, and Guay]{adetola2009adaptive}
Veronica Adetola, Darryl DeHaan, and Martin Guay.
\newblock Adaptive model predictive control for constrained nonlinear systems.
\newblock \emph{Systems \& Control Letters}, 58\penalty0 (5):\penalty0
  320--326, 2009.

\bibitem[Ahmed and Chen(2018)]{ahmed2018sliding}
Nigar Ahmed and Mou Chen.
\newblock Sliding mode control for quadrotor with disturbance observer.
\newblock \emph{Advances in Mechanical Engineering}, 10\penalty0 (7):\penalty0
  1687814018782330, 2018.

\bibitem[Apgar et~al.(2018)Apgar, Clary, Green, Fern, and Hurst]{apgar2018fast}
Taylor Apgar, Patrick Clary, Kevin Green, Alan Fern, and Jonathan~W Hurst.
\newblock Fast online trajectory optimization for the bipedal robot {Cassie}.
\newblock In \emph{Robotics: Science and Systems}, volume 101, page~14, 2018.

\bibitem[Beard(2008)]{beard2008quadrotor}
Randal~W Beard.
\newblock Quadrotor dynamics and control.
\newblock \emph{Brigham Young University}, 19\penalty0 (3):\penalty0 46--56,
  2008.

\bibitem[Boyd and Vandenberghe(2004)]{boyd2004convex}
Stephen Boyd and Lieven Vandenberghe.
\newblock \emph{Convex Optimization}.
\newblock Cambridge University Press, 2004.

\bibitem[Branicky(1998)]{branicky1998multiple}
Michael~S Branicky.
\newblock Multiple {Lyapunov} functions and other analysis tools for switched
  and hybrid systems.
\newblock \emph{IEEE Transactions on Automatic Control}, 43\penalty0
  (4):\penalty0 475--482, 1998.

\bibitem[Di~Carlo et~al.(2018)Di~Carlo, Wensing, Katz, Bledt, and
  Kim]{di2018dynamic}
Jared Di~Carlo, Patrick~M Wensing, Benjamin Katz, Gerardo Bledt, and Sangbae
  Kim.
\newblock Dynamic locomotion in the {MIT} {Cheetah} 3 through convex
  model-predictive control.
\newblock In \emph{International Conference on Intelligent Robots and Systems
  (IROS)}, pages 1--9. IEEE, 2018.

\bibitem[Diehl et~al.(2009)Diehl, Ferreau, and Haverbeke]{Diehl2009}
Moritz Diehl, Hans~Joachim Ferreau, and Niels Haverbeke.
\newblock \emph{Efficient Numerical Methods for Nonlinear MPC and Moving
  Horizon Estimation}, pages 391--417.
\newblock Springer Berlin Heidelberg, Berlin, Heidelberg, 2009.

\bibitem[Ezzine and Haddad(1989)]{ezzine1989controllability}
Jelel Ezzine and AH~Haddad.
\newblock Controllability and observability of hybrid systems.
\newblock \emph{International Journal of Control}, 49\penalty0 (6):\penalty0
  2045--2055, 1989.

\bibitem[Freeman et~al.(2021)Freeman, Frey, Raichuk, Girgin, Mordatch, and
  Bachem]{brax2021github}
C.~Daniel Freeman, Erik Frey, Anton Raichuk, Sertan Girgin, Igor Mordatch, and
  Olivier Bachem.
\newblock Brax --- a differentiable physics engine for large scale rigid body
  simulation, 2021.
\newblock URL \url{http://github.com/google/brax}.

\bibitem[Gregory(June 14, 2021)]{gregory2021urban}
Irene~M. Gregory.
\newblock Urban air mobility: A control-centric approach to addressing
  technical challenge.
\newblock IEEE FoRCE Webinar, June 14, 2021.
\newblock URL
  \url{http://ieeecss.org/index.php/presentation/force-webinars/urban-air-mobility-control-centric-approach-addressing-technical}.

\bibitem[Gregory et~al.(2021)Gregory, Neogi, Campbell, Holbrook, Bacon, Murphy,
  Moerder, Simmons, Acheson, Britton, and Cook]{gregory2021intelligent}
Irene~M. Gregory, Natasha~A. Neogi, Newton~H. Campbell, Jon Holbrook, Barton~J.
  Bacon, Patrick~C. Murphy, Daniel~D. Moerder, Benjamin~M. Simmons, Michael~J.
  Acheson, Thomas~C. Britton, and Jacob Cook.
\newblock Intelligent contingency management for urban air mobility.
\newblock In \emph{AIAA Scitech Forum}, 2021.
\newblock \doi{10.2514/6.2021-1000}.

\bibitem[Hedlund and Rantzer(1999)]{hedlund1999optimal}
Sven Hedlund and Anders Rantzer.
\newblock Optimal control of hybrid systems.
\newblock In \emph{Proceedings of the 38th IEEE Conference on Decision and
  Control}, volume~4, pages 3972--3977. IEEE, 1999.

\bibitem[Houghton et~al.(2022)Houghton, Oshin, Acheson, Theodorou, and
  Gregory]{houghton2022path}
Matthew~D Houghton, Alexander~B Oshin, Michael~J Acheson, Evangelos~A
  Theodorou, and Irene~M Gregory.
\newblock Path planning: Differential dynamic programming and model predictive
  path integral control on {VTOL} aircraft.
\newblock In \emph{AIAA Scitech Forum}, page 0624, 2022.

\bibitem[Jacobson and Mayne(1970)]{jacobson1970differential}
David~H Jacobson and David~Q Mayne.
\newblock \emph{Differential dynamic programming}.
\newblock Number~24. Elsevier Publishing Company, 1970.

\bibitem[Kobilarov et~al.(2015)Kobilarov, Ta, and
  Dellaert]{kobilarov2015differential}
Marin Kobilarov, Duy-Nguyen Ta, and Frank Dellaert.
\newblock Differential dynamic programming for optimal estimation.
\newblock In \emph{International Conference on Robotics and Automation (ICRA)},
  pages 863--869. IEEE, 2015.

\bibitem[Li and Wensing(2020)]{li2020hybrid}
He~Li and Patrick~M Wensing.
\newblock Hybrid systems differential dynamic programming for whole-body motion
  planning of legged robots.
\newblock \emph{IEEE Robotics and Automation Letters}, 5\penalty0 (4):\penalty0
  5448--5455, 2020.

\bibitem[Li and Todorov(2004)]{li2004iterative}
Weiwei Li and Emanuel Todorov.
\newblock Iterative linear quadratic regulator design for nonlinear biological
  movement systems.
\newblock In \emph{ICINCO (1)}, pages 222--229. Citeseer, 2004.

\bibitem[Liao and Shoemaker(1992)]{liao1992advantages}
Li-zhi Liao and Christine~A Shoemaker.
\newblock Advantages of differential dynamic programming over {Newton}'s method
  for discrete-time optimal control problems.
\newblock Technical report, Cornell University, 1992.

\bibitem[Liu et~al.(2021)Liu, Chen, and Theodorou]{liu2021second}
Guan-Horng Liu, Tianrong Chen, and Evangelos Theodorou.
\newblock Second-order neural {ODE} optimizer.
\newblock \emph{Advances in Neural Information Processing Systems}, 34, 2021.

\bibitem[Mayne(1966)]{mayne1966second}
David Mayne.
\newblock A second-order gradient method for determining optimal trajectories
  of non-linear discrete-time systems.
\newblock \emph{International Journal of Control}, 3\penalty0 (1):\penalty0
  85--95, 1966.

\bibitem[Orin et~al.(2013)Orin, Goswami, and Lee]{orin2013centroidal}
David~E Orin, Ambarish Goswami, and Sung-Hee Lee.
\newblock Centroidal dynamics of a humanoid robot.
\newblock \emph{Autonomous Robots}, 35\penalty0 (2):\penalty0 161--176, 2013.

\bibitem[Plancher and Kuindersma(2018)]{plancher2018performance}
Brian Plancher and Scott Kuindersma.
\newblock A performance analysis of parallel differential dynamic programming
  on a {GPU}.
\newblock In \emph{International Workshop on the Algorithmic Foundations of
  Robotics}, pages 656--672. Springer, 2018.

\bibitem[Silva et~al.(2018)Silva, Johnson, Solis, Patterson, and
  Antcliff]{silva2018vtol}
Christopher Silva, Wayne~R Johnson, Eduardo Solis, Michael~D Patterson, and
  Kevin~R Antcliff.
\newblock {VTOL} urban air mobility concept vehicles for technology
  development.
\newblock In \emph{Aviation Technology, Integration, and Operations
  Conference}, page 3847, 2018.

\bibitem[Spong(1998)]{spong1998underactuated}
Mark~W Spong.
\newblock Underactuated mechanical systems.
\newblock In \emph{Control Problems in Robotics and Automation}, pages
  135--150. Springer, 1998.

\bibitem[Stachowicz and Theodorou(2021)]{stachowicz2021optimal}
Kyle Stachowicz and Evangelos~A Theodorou.
\newblock Optimal-horizon model-predictive control with differential dynamic
  programming.
\newblock \emph{arXiv preprint arXiv:2111.09207}, 2021.

\bibitem[Tassa et~al.(2012)Tassa, Erez, and Todorov]{tassa2012synthesis}
Yuval Tassa, Tom Erez, and Emanuel Todorov.
\newblock Synthesis and stabilization of complex behaviors through online
  trajectory optimization.
\newblock In \emph{International Conference on Intelligent Robots and Systems
  (IROS)}, pages 4906--4913. IEEE, 2012.

\bibitem[Tassa et~al.(2014)Tassa, Mansard, and Todorov]{tassa2014control}
Yuval Tassa, Nicolas Mansard, and Emo Todorov.
\newblock Control-limited differential dynamic programming.
\newblock In \emph{International Conference on Robotics and Automation (ICRA)},
  pages 1168--1175. IEEE, 2014.

\bibitem[Todorov and Li(2005)]{todorov2005generalized}
Emanuel Todorov and Weiwei Li.
\newblock A generalized iterative {LQG} method for locally-optimal feedback
  control of constrained nonlinear stochastic systems.
\newblock In \emph{Proceedings of the American Control Conference}, pages
  300--306. IEEE, 2005.

\end{thebibliography}

\clearpage
\crefalias{section}{appendix}
\crefalias{subsection}{appendix}
\appendix
\subsection{Full $Q$ derivatives}
\label{proof/full_Q_derivatives}

The full $Q$ function derivatives are given by
\begin{equation} \label{eq/full_Q_derivatives}
    \begin{split}
        & Q^0_t = \mathcal{L}^0_t + V^0_{t + 1} ,\\
        & Q^x_t = \mathcal{L}^x_t + (\mathbf{F}^x_t)^\top V^x_{t + 1} ,\\
        & Q^u_t = \mathcal{L}^u_t + (\mathbf{F}^u_t)^\top V^x_{t + 1} ,\\
        & Q^\theta_t = \mathcal{L}^\theta_t + V^\theta_{t + 1} + (\mathbf{F}^\theta_t)^\top V^x_{t + 1} ,\\
        & \begin{aligned}
            Q^{xx}_t & = \mathcal{L}^{xx}_t + (\mathbf{F}^x_t)^\top V^{xx}_{t + 1} \mathbf{F}^x_t + \textcolor{red}{V^x_{t + 1} \cdot \mathbf{F}^{xx}_t} ,
        \end{aligned} \\
        & \begin{aligned}
            Q^{xu}_t & = \mathcal{L}^{xu}_t + (\mathbf{F}^x_t)^\top V^{xx}_{t + 1} \mathbf{F}^u_t + \textcolor{red}{V^x_{t + 1} \cdot \mathbf{F}^{xu}_t} = (Q^{ux}_t)^\top ,
        \end{aligned} \\
        & \begin{aligned}
            Q^{uu}_t & = \mathcal{L}^{uu}_t + (\mathbf{F}^u_t)^\top V^{xx}_{t + 1} \mathbf{F}^u_t + \textcolor{red}{V^x_{t + 1} \cdot \mathbf{F}^{uu}_t} ,
        \end{aligned} \\
        & \begin{aligned}
            Q^{x\theta}_t = {}& \mathcal{L}^{x\theta}_t + (\mathbf{F}^x_t)^\top V^{x\theta}_{t + 1} + (\mathbf{F}^x_t)^\top V^{xx}_{t + 1} \mathbf{F}^\theta_t \\
            & + \textcolor{red}{V^x_{t + 1} \cdot \mathbf{F}^{x \theta}_t} = (Q^{\theta x}_t)^\top ,
        \end{aligned} \\
        & \begin{aligned}
            Q^{u\theta}_t = {}& \mathcal{L}^{u\theta}_t + (\mathbf{F}^u_t)^\top V^{x\theta}_{t + 1} + (\mathbf{F}^u_t)^\top V^{xx}_{t + 1} \mathbf{F}^\theta_t \\
            & + \textcolor{red}{V^x_{t + 1} \cdot \mathbf{F}^{u \theta}_t} = (Q^{\theta u}_t)^\top ,
        \end{aligned} \\
        & \begin{aligned}
            Q^{\theta\theta}_t = {}& \mathcal{L}^{\theta\theta}_t + V^{\theta\theta}_{t + 1} + V^{\theta x}_{t + 1} \mathbf{F}^{\theta}_t + (\mathbf{F}^\theta_t)^\top V^{x\theta}_{t + 1} \\
            & + (\mathbf{F}^\theta_t)^\top V^{xx}_{t + 1} \mathbf{F}^\theta_t + \textcolor{red}{V^x_{t + 1} \cdot \mathbf{F}^{\theta\theta}_t} .
        \end{aligned}
    \end{split}
\end{equation}

The terms highlighted in red denote contractions of the second-order derivative dynamics tensors with the vector $V^x_{t + 1}$. These terms are dropped in the implementation of \ac{PDDP} following \ac{iLQR} \cite{li2004iterative}.

\subsection{Proof of \cref{prop/parameters_are_newton_step}}
\label{proof/parameters_are_newton_step}
~\\
\begin{proof}
    From \cref{eq/scaled_optimal_parameter_update,eq/parameter_update_gain}, $\delta \btheta^*$ has the following form:
    \begin{equation} \label{eq/parameter_newton_step}
        \begin{split}
            \delta \btheta^* & \begin{aligned} = \epsilon \mathbf{m} \end{aligned} \\
            & \begin{aligned}
            = {}& - \epsilon (\underbrace{Q^{\theta\theta}_1 - Q^{\theta u}_1 (Q^{uu}_1)^{-1} Q^{u\theta}_1}_{V^{\theta\theta}_1})^{-1} \\
            & \times (\underbrace{Q^{\theta}_1 - Q^{\theta u}_1 (Q^{uu}_1)^{-1} Q^u_1}_{V^\theta_1})
            \end{aligned} \\
            & \begin{aligned} = - \epsilon (V^{\theta\theta}_1)^{-1} V^{\theta}_1.  \end{aligned}
    	\end{split}
    \end{equation}
    $V^{\theta}_1$ and $V^{\theta\theta}_1$ are the gradient and Hessian of the value function with respect to the parameters at the first timestep. The step in \cref{eq/parameter_newton_step} corresponds exactly to the direction given by an iteration of Newton's method for minimizing the value function at the initial time with respect to the parameters.
\end{proof}

\subsection{Proof of \cref{lemma/cost_fn_gradient}}
\label{proof/cost_fn_gradient}

\begin{proof}
    Using the expression for the cost function $\mathcal{J}$ given in \cref{eq/parameterized_cost_fn},
    \begin{equation*}
        \begin{split}
            & \begin{aligned}[b]
                \nabla_{\mathbf{u}_t} \mathcal{J} & = \nabla_{\mathbf{u}_t} \left[ \sum_{i = 1}^{T} \mathcal{L}(\mathbf{x}_i, \mathbf{u}_i; \btheta) + \phi(\mathbf{x}_{T + 1}; \btheta) \right]
            \end{aligned} \\
            & \begin{aligned}[b]
                \quad = {}& \mathcal{L}^u_t + \left( (\mathcal{L}^x_{t + 1})^\top \mathbf{F}^u_t \right)^\top + \ldots \\
                & + \left( (\mathcal{L}^x_{T})^\top \mathbf{F}^x_{T - 1} \ldots \mathbf{F}^x_{t + 1} \mathbf{F}^u_t \right)^\top \\
                & + \left( (\phi^x_{T + 1})^\top \mathbf{F}^x_{T} \mathbf{F}^x_{T - 1} \ldots \mathbf{F}^u_{t} \right)^\top
            \end{aligned} \\
            & \begin{aligned}[b]
                \quad = \mathcal{L}^u_t + (\mathbf{F}^u_t)^\top \Big( & \mathcal{L}^x_{t + 1} + (\mathbf{F}^x_{t + 1})^\top \mathcal{L}^x_{t + 2} + \ldots \\
                & + (\mathbf{F}^x_{t + 1})^\top \ldots (\mathbf{F}^x_{T - 1})^\top \mathcal{L}^x_T \\
                & + (\mathbf{F}^x_{t + 1})^\top \ldots (\mathbf{F}^x_{T})^\top \phi^x_{T + 1} \Big)
            \end{aligned} \\
            & \begin{aligned}[b]
                & \quad = \mathcal{L}^u_t + (\mathbf{F}^u_t)^\top \\
                & \times \left( \underbrace{\mathcal{L}^x_{t + 1} + (\mathbf{F}^x_{t + 1})^\top \left( \underbrace{\mathcal{L}^x_{t + 2} + \ldots + (\mathbf{F}^x_{t + 2})^\top \ldots (\mathbf{F}^x_{T})^\top \phi^x_{T + 1}}_{\eta_{t + 2}} \right)}_{\eta_{t + 1}} \right)
            \end{aligned} \\
            & \quad = \mathcal{L}^u_t + (\mathbf{F}^u_t)^\top \eta_{t + 1}
        .\end{split}
    \end{equation*}
    This shows \cref{eq/J_u} of \cref{lemma/cost_fn_gradient} is true.
\end{proof}

\subsection{Proof of \cref{lemma/updates_order_epsilon}}
\label{proof/updates_order_epsilon}

\begin{proof}
    From the definition of $\delta \btheta$ in \cref{eq/scaled_optimal_parameter_update},
    \begin{align*}
        \delta \btheta & = \epsilon \mathbf{m} = O(\epsilon)
    .\end{align*}

    Now, \cref{eq/delta_u_x_O_epsilon} is proven true by induction. Starting with $t = 1$,
    \begin{equation*}
        \begin{split}
            \delta \mathbf{u}_1 & = \epsilon \mathbf{k}_1 + \mathbf{K}_1 \underbrace{\delta \mathbf{x}_1}_{= 0} + \underbrace{\mathbf{M}_1 \delta \btheta}_{=O(\epsilon)} = O(\epsilon) , \\
            \delta \mathbf{x}_2 & = \mathbf{x}_2 - \bar{\mathbf{x}}_2 \\
            & = \mathbf{F}(\bar{\mathbf{x}}_1, \bar{\mathbf{u}}_1 + \delta \mathbf{u}_1; \bar{\btheta} + \delta \btheta) - \mathbf{F}(\bar{\mathbf{x}}_1, \bar{\mathbf{u}}_1; \bar{\btheta}) \\
            & = \mathbf{F}^u_1 \delta \mathbf{u}_1 + \mathbf{F}^\theta_1 \delta \btheta + \underbrace{O(\norm{\delta \mathbf{u}_1}_2^2 + \norm{\delta \btheta}_2^2)}_{=O(\epsilon^2)} = O(\epsilon)
        .\end{split}
    \end{equation*}
    This shows \cref{eq/delta_u_x_O_epsilon} is true for $t = 1$. Now, assume \cref{eq/delta_u_x_O_epsilon} holds for $t = i$, namely $\delta \mathbf{u}_i = O(\epsilon)$ and $\delta \mathbf{x}_{i + 1} = O(\epsilon)$. Then, for $t = i + 1$,
    \begin{equation*}
        \begin{split}
            & \delta \mathbf{u}_{i + 1} = \epsilon \mathbf{k}_{i + 1} + \mathbf{K}_{i + 1} \delta \mathbf{x}_{i + 1} + \mathbf{M}_{i + 1} \delta \btheta = O(\epsilon) ,\\
            & \begin{aligned}[b]
                \delta \mathbf{x}_{i + 2} = {}& \mathbf{F}^x_{i + 1} \delta\mathbf{x}_{i + 1} + \mathbf{F}^u_{i + 1} \delta\mathbf{u}_{i + 1} + \mathbf{F}^\theta_{i + 1} \delta \btheta \\
                & + O(\norm{\delta \mathbf{x}_{i + 1}}_2^2 + \norm{\delta \mathbf{u}_{i + 1}}_2^2 + \norm{\delta \btheta}_2^2) = O(\epsilon)
            \end{aligned}
        .\end{split}
    \end{equation*}
    Therefore, by induction, \cref{eq/delta_u_x_O_epsilon} is true for all $t = 1, \ldots, T$.
\end{proof}

\subsection{Proof of \cref{prop/control_updates_are_descent_direction}}
\label{proof/control_updates_are_descent_direction}

\begin{proof}
    To show \cref{eq/control_updates_descent_direction} is true, it is sufficient to show
    \begin{align}
        \sum_{t = 1}^{T} (\nabla_{\mathbf{u}_t} \mathcal{J})^\top \delta \mathbf{u}_t & = -\epsilon \sum_{t = 1}^{T} (\lambda_t + \gamma_t \mathbf{m}) + O(\epsilon^2) \label{eq/control_updates_descent_epsilon}
    ,\end{align}
    with $\lambda_t = (Q^u_t)^\top (Q^{uu}_t)^{-1} Q^u_t$ and $\gamma_t = (Q^u_t)^\top (Q^{uu}_t)^{-1} Q^{u \theta}_t$. To show \cref{eq/control_updates_descent_epsilon}, it is sufficient to prove using induction that
    \begin{equation} \label{eq/control_updates_inductive_step}
        \begin{split}
            \sum_{t = i}^{T} (\nabla_{\mathbf{u}_t} \mathcal{J})^\top \delta \mathbf{u}_t = {}& -\epsilon \sum_{t = i}^{T} (\lambda_t + \gamma_t \mathbf{m}) \\
            & + (V^x_i - \eta_i)^\top \delta \mathbf{x}_i + O(\epsilon^2)
        \end{split}
    .\end{equation}

    Starting at $i = T$,
    \begin{equation*}
        \begin{split}
            & (\nabla_{\mathbf{u}_T} \mathcal{J})^\top \delta \mathbf{u}_T \nonumber \\
            & \quad = (\underbrace{\mathcal{L}^u_T + (\mathbf{F}^u_T)^\top \phi^x_{T + 1}}_{Q^u_T})^\top (\epsilon \mathbf{k}_T + \mathbf{K}_T \delta \mathbf{x}_T + \mathbf{M}_T \delta \btheta) \\
            & \quad = \epsilon (Q^u_T)^\top \mathbf{k}_T + (Q^u_T)^\top \mathbf{K}_T \delta \mathbf{x}_T + (Q^u_T)^\top \mathbf{M}_T \delta \btheta \\
            & \quad \begin{aligned}[t]
                = {}& - \epsilon \underbrace{(Q^u_T)^\top (Q^{uu}_T)^{-1} Q^u_T}_{\lambda_T} - \epsilon \underbrace{(Q^u_T)^\top (Q^{uu}_T)^{-1} Q^{u\theta}_T}_{\gamma_T} \mathbf{m} \\
                & + \left[ \underbrace{Q^x_T + (\mathbf{K}_T)^\top Q^u_T}_{V^x_T} - Q^x_T \right] \delta \mathbf{x}_T
            \end{aligned} \\
            & \quad \begin{aligned}[t]
                = {}& -\epsilon (\lambda_T + \gamma_T \mathbf{m}) \\
                & + \left[ V^x_T - (\underbrace{\mathcal{L}^x_T + (\mathbf{F}^x_T)^\top \phi^x_{T + 1}}_{\eta_T}) \right]^\top \delta \mathbf{x}_T
            \end{aligned} \\
            & \quad = -\epsilon (\lambda_T + \gamma_T \mathbf{m}) + (V^x_T - \eta_T)^\top \delta \mathbf{x}_T
        .\end{split}
    \end{equation*}

    Next, assume \cref{eq/control_updates_inductive_step} holds for $i = k + 1$, namely
    \begin{equation*}
        \begin{aligned}[t]
            \sum_{t = k + 1}^{T} (\nabla_{\mathbf{u}_t} \mathcal{J})^\top \delta \mathbf{u}_t = {}& -\epsilon \sum_{t = k + 1}^{T} (\lambda_t + \gamma_t \mathbf{m}) \\
            & + (V^x_{k + 1} - \eta_{k + 1})^\top \delta \mathbf{x}_{k + 1} + O(\epsilon^2)
        .\end{aligned}
    \end{equation*}

    Then, for $i = k$,
    \begingroup
    \allowdisplaybreaks
    \begin{align*}
        & \begin{aligned}[b]
            \sum_{t = k}^T (\nabla \mathbf{u}_t \mathcal{J})^\top \delta \mathbf{u}_t = (\nabla \mathbf{u}_k \mathcal{J})^\top \delta \mathbf{u}_k + \sum_{t = k + 1}^T (\nabla \mathbf{u}_t \mathcal{J})^\top \delta \mathbf{u}_t
        \end{aligned} \\
        & \begin{aligned}[b]
            = {}& (\nabla \mathbf{u}_k \mathcal{J})^\top \delta \mathbf{u}_k - \epsilon \sum_{t = k + 1}^T (\lambda_t + \gamma_t \mathbf{m}) \\
            & + (V^x_{k + 1} - \eta_{k + 1})^\top \delta \mathbf{x}_{k + 1} + O(\epsilon^2)
        \end{aligned} \\
        & \begin{aligned}[b]
            = {}& (\mathcal{L}^u_k + (\mathbf{F}^u_k)^\top \eta_{k + 1})^\top \delta\mathbf{u}_k - \epsilon \sum_{t = k + 1}^{T} (\lambda_t + \gamma_t \mathbf{m}) \\
            & + (V^x_{k + 1} - \eta_{k + 1})^\top \Big[ \mathbf{F}^x_k \delta\mathbf{x}_k + \mathbf{F}^u_k \delta\mathbf{u}_k \\
            & \quad + \underbrace{O(\norm{\delta\mathbf{x}_k}^2 + \norm{\delta\mathbf{u}_k}^2)}_{O(\epsilon^2)} \Big] + O(\epsilon^2)
        \end{aligned} \\
        & \begin{aligned}[b]
            = {}& (\mathcal{L}^u_k + (\mathbf{F}^u_k)^\top \eta_{k + 1})^\top \delta\mathbf{u}_k - \epsilon \sum_{t = k + 1}^{T} (\lambda_t + \gamma_t \mathbf{m}) \\
            & + (V^x_{k + 1} - \eta_{k + 1})^\top \mathbf{F}^x_k \delta\mathbf{x}_k \\
            & + (V^x_{k + 1} - \eta_{k + 1})^\top \mathbf{F}^u_k \delta\mathbf{u}_k + O(\epsilon^2)
        \end{aligned} \\
        & \begin{aligned}[b]
            = {}& (\mathcal{L}^u_k + (\mathbf{F}^u_k)^\top \left[ \cancel{\eta_{k + 1} - \eta_{k + 1}} + V^x_{k + 1} \right])^\top \delta\mathbf{u}_k \\
            & - \epsilon \sum_{t = k + 1}^{T} (\lambda_t + \gamma_t \mathbf{m}) + (V^x_{k + 1} - \eta_{k + 1})^\top \mathbf{F}^x_k \delta\mathbf{x}_k + O(\epsilon^2)
        \end{aligned} \\
        & \begin{aligned}[b]
            = {}& (\underbrace{\mathcal{L}^u_k + (\mathbf{F}^u_k)^\top V^x_{k + 1}}_{Q^u_k})^\top \delta\mathbf{u}_k - \epsilon \sum_{t = k + 1}^{T} (\lambda_t + \gamma_t \mathbf{m}) \\
            & + (V^x_{k + 1} - \eta_{k + 1})^\top \mathbf{F}^x_k \delta\mathbf{x}_k + O(\epsilon^2)
        \end{aligned} \\
        & \begin{aligned}[b]
            = {}& (Q^u_k)^\top (\epsilon \mathbf{k}_k + \mathbf{K}_k \delta\mathbf{x}_k + \mathbf{M}_k \delta\btheta) \\
            & - \epsilon \sum_{t = k + 1}^{T} (\lambda_t + \gamma_t \mathbf{m}) \\
            & + (V^x_{k + 1} - \eta_{k + 1})^\top \mathbf{F}^x_k \delta\mathbf{x}_k + O(\epsilon^2)
        \end{aligned} \\
        & \begin{aligned}[b]
            = {}& \epsilon (Q^u_k)^\top \mathbf{k}_k + (Q^u_k)^\top \mathbf{K}_k \delta\mathbf{x}_k + (Q^u_k)^\top \mathbf{M}_k \delta\btheta \\
            & - \epsilon \sum_{t = k + 1}^{T} (\lambda_t + \gamma_t \mathbf{m}) \\
            & + (V^x_{k + 1} - \eta_{k + 1})^\top \mathbf{F}^x_k \delta\mathbf{x}_k + O(\epsilon^2)
        \end{aligned} \\
        & \begin{aligned}[b]
            = {}& -\epsilon \underbrace{(Q^u_k)^\top (Q^{uu}_k)^{-1} Q^u_k}_{\lambda_k} - \epsilon \underbrace{(Q^u_k)^\top (Q^{uu}_k)^{-1} Q^{u\theta}_k}_{\gamma_k} \mathbf{m} \\
            & - \epsilon \sum_{t = k + 1}^{T} (\lambda_t + \gamma_t \mathbf{m}) + (V^x_{k + 1} - \eta_{k + 1})^\top \mathbf{F}^x_k \delta\mathbf{x}_k \\
            & + (Q^u_k)^\top \mathbf{K}_k \delta\mathbf{x}_k + O(\epsilon^2)
        \end{aligned} \\
        & \begin{aligned}[b]
            = {}& - \epsilon \sum_{t = k}^{T} (\lambda_t + \gamma_t \mathbf{m}) + \Big[ \mathcal{L}^x_k \underbrace{-\mathcal{L}^x_k - (\mathbf{F}^x_k)^\top \eta_{k + 1}}_{-\eta_k} \\
            & \qquad + (\mathbf{F}^x_k)^\top V^x_{k + 1} + \mathbf{K}_k^\top Q^u_k \Big]^\top \delta\mathbf{x}_k + O(\epsilon^2)
        \end{aligned} \\
        & \begin{aligned}[b]
            = {}& - \epsilon \sum_{t = k}^{T} (\lambda_t + \gamma_t \mathbf{m}) \\
            & + \Big[ \underbrace{\mathcal{L}^x_k + (\mathbf{F}^x_k)^\top V^x_{k + 1}}_{Q^x_k} + \mathbf{K}_k^\top Q^u_k - \eta_k \Big]^\top \delta\mathbf{x}_k + O(\epsilon^2)
        \end{aligned} \\
        & \begin{aligned}[b]
            = {}& - \epsilon \sum_{t = k}^{T} (\lambda_t + \gamma_t \mathbf{m}) \\
            & + \Big[ \underbrace{Q^x_k + \mathbf{K}_k^\top Q^u_k}_{V^x_k} - \eta_k \Big]^\top \delta\mathbf{x}_k + O(\epsilon^2)
        \end{aligned} \\
        & = - \epsilon \sum_{t = k}^{T} (\lambda_t + \gamma_t \mathbf{m}) + (V^x_k - \eta_k)^\top \delta\mathbf{x}_k + O(\epsilon^2)
    \end{align*}
    \endgroup

    This implies, by induction, that \cref{eq/control_updates_inductive_step} holds for all $i = T, \ldots, 1$. Taking $i = 1$ in \cref{eq/control_updates_inductive_step} implies \cref{eq/control_updates_descent_epsilon} is true, since $\delta \mathbf{x}_1 = 0$.

    Finally, by \cref{prop/parameters_are_newton_step}, the form of \cref{eq/control_updates_descent_epsilon} can be expressed as
    \begin{equation*}
        \begin{split}
            \mkern-18mu \sum_{t = 1}^{T} (\nabla_{\mathbf{u}_t} \mathcal{J})^\top \delta \mathbf{u}_t & = -\epsilon \sum_{t = 1}^{T} (\lambda_t + \gamma_t \mathbf{m}) + O(\epsilon^2) \\
            & = - \epsilon \sum_{t = 1}^{T} \lambda_t + \epsilon \left( \sum_{t = 1}^{T} \gamma_t \right) (V^{\theta\theta}_1)^{-1} V^\theta_1 + O(\epsilon^2)
        .\end{split}
    \end{equation*}
    This shows that there exists some $\epsilon$ sufficiently small such that \cref{eq/control_updates_descent_direction} holds.
\end{proof}

\subsection{Proof of \cref{lemma/value_fn_full_zero_order_term}}
\label{proof/value_fn_full_zero_order_term}

\begin{proof}
    Starting with \cref{eq/parameterized_Q_approximation} and substituting in the expressions for $\delta \mathbf{u}_t^*$ from \cref{eq/optimal_control_update} and $\delta \btheta^*$ from \cref{eq/scaled_optimal_parameter_update} yields
    \begin{equation*}
        \begin{split}
            & \begin{aligned}[b]
                & Q(\mathbf{x}_t, \mathbf{u}_t^*; \btheta^*) \\
                & \quad \approx \bigg[ Q^0_t + \epsilon (Q^u_t)^\top \mathbf{k}_t + \epsilon (Q^u_t)^\top \mathbf{M}_t \mathbf{m} + \epsilon (Q^\theta_t)^\top \mathbf{m} \\
                & \qquad + \frac{1}{2} \epsilon^2 \mathbf{k}_t^\top Q^{uu}_t \mathbf{k}_t + \epsilon^2 \mathbf{k}_t^\top Q^{uu}_t \mathbf{M}_t \mathbf{m} \\
                & \qquad + \frac{1}{2} \epsilon^2 \mathbf{m}^\top \mathbf{M}_t^\top Q^{uu}_t \mathbf{M}_t \mathbf{m} + \epsilon^2 \mathbf{k}_t Q^{u\theta}_t \mathbf{m} \\
                & \qquad + \epsilon^2 \mathbf{m}^\top \mathbf{M}_t^\top Q^{u\theta}_t \mathbf{m} + \frac{1}{2} \epsilon^2 \mathbf{m}^\top Q^{\theta\theta}_t \mathbf{m} \bigg] \\
                & \qquad + \bigg[ (Q^x_t)^\top + (Q^u_t)^\top \mathbf{K}_t + \epsilon \mathbf{k}_t^\top Q^{ux}_t \\
                & \qquad + \epsilon \mathbf{m}^\top \mathbf{M}_t^\top Q^{ux}_t + \epsilon \mathbf{m}^\top Q^{\theta x}_t + \epsilon \mathbf{k}_t^\top Q^{uu}_t \mathbf{K}_t \\
                & \qquad + \epsilon \mathbf{m}^\top \mathbf{M}^\top Q^{uu}_t \mathbf{K}_t + \epsilon \mathbf{m}^\top Q^{\theta u}_t \mathbf{K}_t \bigg] \delta \mathbf{x}_t \\
                & \qquad + \frac{1}{2} \delta \mathbf{x}_t^\top \bigg[ Q^{xx}_t + 2 Q^{xu}_t \mathbf{K}_t + \mathbf{K}_t^\top Q^{uu}_t \mathbf{K}_t \bigg] \delta \mathbf{x}_t
            ,\end{aligned}
        \end{split}
    \end{equation*}
    where the zero-, first-, and second-order terms in $\delta \mathbf{x}_t$ have been grouped together. Equating like powers in \cref{eq/parameterized_value_fn_approx} gives
    \begingroup
    \allowdisplaybreaks
    \begin{align*}
        & \begin{aligned}[b]
            & V^0_t = Q^0_t + \epsilon (Q^u_t)^\top \mathbf{k}_t + \epsilon (Q^u_t)^\top \mathbf{M}_t \mathbf{m} + \epsilon (Q^\theta_t)^\top \mathbf{m} \\
            & \qquad + \frac{1}{2} \epsilon^2 \mathbf{k}_t^\top Q^{uu}_t \mathbf{k}_t + \epsilon^2 \mathbf{k}_t^\top Q^{uu}_t \mathbf{M}_t \mathbf{m} \\
            & \qquad + \frac{1}{2} \epsilon^2 \mathbf{m}^\top \mathbf{M}_t^\top Q^{uu}_t \mathbf{M}_t \mathbf{m} + \epsilon^2 \mathbf{k}_t Q^{u\theta}_t \mathbf{m} \\
            & \qquad + \epsilon^2 \mathbf{m}^\top \mathbf{M}_t^\top Q^{u\theta}_t \mathbf{m} + \frac{1}{2} \epsilon^2 \mathbf{m}^\top Q^{\theta\theta}_t \mathbf{m}
        \end{aligned} \\
        & \begin{aligned}[b]
            & \quad = Q^0_t - \epsilon \underbrace{(Q^u_t)^\top (Q^{uu}_t)^{-1} Q^u_t}_{\lambda_t} - \epsilon (Q^u_t)^\top (Q^{uu}_t)^{-1} Q^{u\theta}_t \mathbf{m} \\
            & \qquad + \epsilon (Q^\theta_t)^\top \mathbf{m} + \frac{1}{2} \epsilon^2 \underbrace{(Q^u_t)^\top (Q^{uu}_t)^{-1} Q^u_t}_{\lambda_t} \\
            & \qquad + \epsilon^2 (Q^u_t)^\top (Q^{uu}_t)^{-1} Q^{u\theta}_t \mathbf{m} + \frac{1}{2} \epsilon^2 \mathbf{m}^\top Q^{\theta u}_t (Q^{uu}_t)^{-1} Q^{u\theta}_t \mathbf{m} \\
            & \qquad - \epsilon^2 (Q^u_t)^\top (Q^{uu}_t)^{-1} Q^{u\theta}_t \mathbf{m} - \epsilon^2 \mathbf{m}^\top Q^{\theta u}_t (Q^{uu}_t)^{-1} Q^{u\theta}_t \mathbf{m} \\
            & \qquad + \frac{1}{2} \epsilon^2 \mathbf{m}^\top Q^{\theta\theta}_t \mathbf{m}
        \end{aligned} \\
        & \begin{aligned}[b]
            & \quad = Q^0_t - \epsilon (1 - \frac{1}{2} \epsilon) \lambda_t + \epsilon (\underbrace{Q^\theta_t - Q^{\theta u}_t (Q^{uu}_t)^{-1} Q^u_t}_{V^\theta_t} )^\top \mathbf{m} \\
            & \qquad + \frac{1}{2} \epsilon^2 \mathbf{m}^\top (\underbrace{Q^{\theta\theta}_t - Q^{\theta u}_t (Q^{uu}_t)^{-1} Q^{u \theta}_t}_{V^{\theta\theta}_t}) \mathbf{m}
        ,\end{aligned}
    \end{align*}
    \endgroup
    which shows the form of \cref{eq/V_0_t_full}. Now let $t = 1$ and substituting in the expression for $\mathbf{m}$ given in \cref{eq/parameter_newton_step} yields
    \begin{equation*}
        \begin{split}
            & \begin{aligned}[b]
                & V^0_1 = Q^0_1 - \epsilon (1 - \frac{1}{2} \epsilon) \lambda_1 \\
                & \qquad - \epsilon \underbrace{(V^\theta_1)^\top (V^{\theta\theta}_1)^{-1} V^\theta_1}_{\psi} + \frac{1}{2} \epsilon^2 \underbrace{(V^\theta_1)^\top (V^{\theta\theta}_1)^{-1} V^\theta_1}_{\psi}
            \end{aligned} \\
            & \quad = Q^0_1 - \epsilon (1 - \frac{1}{2} \epsilon) (\lambda_1 + \psi)
        ,\end{split}
    \end{equation*}
    which shows \cref{eq/V_0_1}.
\end{proof}

\subsection{Proof of \cref{prop/cost_reduction}}
\label{proof/cost_reduction}

\begin{proof}
    Let the cost of the nominal trajectory $\bar{\mathbf{x}}_t, \bar{\mathbf{u}}_t, \bar{\btheta}$ after iteration $k$ be $\mathcal{J}^{(k)}$. The cost $\mathcal{J}^{(k + 1)}$ after iteration $k + 1$ when applying the optimal control and parameter update is given by $V^0_1$ given in \cref{eq/V_0_1} plus higher-order terms, namely
    \begin{equation*}
        \begin{split}
            \mathcal{J}^{(k + 1)} & = V^0_1 + O(\epsilon^3) \\
            & = Q^0_1 - \epsilon (1 - \frac{1}{2} \epsilon) (\lambda_1 + \psi) + O(\epsilon^3) \\
            & = \mathcal{L}^0_1 + V^0_2 - \epsilon (1 - \frac{1}{2} \epsilon) (\lambda_1 + \psi) + O(\epsilon^3) \\
            & = \mathcal{L}^0_1 + Q^0_2 - \epsilon (1 - \frac{1}{2} \epsilon) \left( \sum_{t = 1}^2 \lambda_t + \psi \right) + O(\epsilon^3) \\
            & = \sum_{t = 1}^2 \mathcal{L}_t^0 + V_3^0 - \epsilon (1 - \frac{1}{2} \epsilon) \left( \sum_{t = 1}^2 \lambda_t + \psi \right) + O(\epsilon^3) \\
            & \quad \vdots \nonumber \\
            & = \underbrace{\sum_{t = 1}^{T} \mathcal{L}_t^0 + \phi_{T + 1}^0}_{\mathcal{J}^{(k)}} - \epsilon (1 - \frac{1}{2} \epsilon) \left( \sum_{t = 1}^T \lambda_t + \psi \right) + O(\epsilon^3) \\
            & = \mathcal{J}^{(k)} - \epsilon (1 - \frac{1}{2} \epsilon) \left( \sum_{t = 1}^T \lambda_t + \psi \right) + O(\epsilon^3)
        .\end{split}
    \end{equation*}
    Thus,
    \begin{equation*}
        \begin{split}
            \Delta \mathcal{J}^{(k + 1)} & = \mathcal{J}^{(k + 1)} - \mathcal{J}^{(k)} \\
            & = - \epsilon (1 - \frac{1}{2} \epsilon) \left( \sum_{t = 1}^T \lambda_t + \psi \right) + O(\epsilon^3)
        .\end{split}
    \end{equation*}
\end{proof}

\newpage
\subsection{Proof of \cref{thm/pddp_converges}}
\label{proof/pddp_converges}

\begin{proof}
    First, note that for $0 < \epsilon \leq 1$,
    \begin{equation*}
        \begin{split}
            -(1 - \frac{1}{2} \epsilon) \leq -\frac{1}{2} \implies -\epsilon (1 - \frac{1}{2} \epsilon) \leq -\frac{1}{2} \epsilon
        .\end{split}
    \end{equation*}
    Therefore, by \cref{prop/cost_reduction}, the cost reduction after the $k^\text{th}$ iteration is upper bounded by
    \begin{align}
        \Delta \mathcal{J}^{(k)} & \leq - \frac{1}{2} \epsilon \left(\sum_{t = 1}^T \lambda_t + \psi \right) \label{eq/cost_reduction_upper_bound}
    .\end{align}

    Now, by \cref{prop/parameters_are_newton_step} and \cref{prop/control_updates_are_descent_direction}, there exists some $0 < \epsilon_1 \leq 1$ such that for all $0 < \epsilon \leq \epsilon_1$,
    \begin{equation*}
        \begin{split}
            \sum_{t = 1}^T (\nabla_{\mathbf{u}_t} \mathcal{J})^\top \delta \mathbf{u}_t & \leq - \epsilon \sum_{t = 1}^T \lambda_t + \epsilon \left( \sum_{t = 1}^T \gamma_t \right) (V^{\theta\theta}_1)^{-1} V^\theta_1 \\
            \delta \btheta & = -\epsilon (V^{\theta\theta}_1)^{-1} V^\theta_1
        .\end{split}
    \end{equation*}
    Likewise, by \cref{eq/cost_reduction_upper_bound}, there exists some $0 < \epsilon_2 \leq \epsilon_1$ such that for all $0 < \epsilon \leq \epsilon_2$,
    \begin{equation*}
        \Delta \mathcal{J}^{(k)} \leq - \frac{1}{2} \epsilon \left( \sum_{t = 1}^T \lambda_t + \psi \right)
    ,\end{equation*}
    which implies the sequence of costs after successive iterations of \ac{PDDP} is monotonically decreasing.

    To proceed, it is assumed that the space of controls and parameters is compact. Thus, since $\mathcal{J}$ is a continuous function over a compact space, it is bounded, so there exists some $\mathbf{U}^*$ and $\btheta^*$ such that
    \begin{align}
        \lim_{k \to \infty} \mathcal{J}^{(k)} & = \mathcal{J}(\mathbf{U}^*; \btheta^*) \label{eq/costs_converge}
    ,\end{align}
    meaning the sequence of costs converge to a minimum. Finally, it can be shown that $\mathbf{U}^{(k)}$ and $\btheta^{(k)}$ converge to the minimizers.

    Note that \cref{eq/costs_converge} implies that $\Delta \mathcal{J}^{(k)} \to 0$ as $k \to \infty$. This further implies that for all $t = 1, \ldots, T$, $\lambda_t \to 0$ and $\psi \to 0$; so
    \begin{equation*}
        \begin{split}
            \lambda_t \to 0 & \implies (Q^u_t)^\top (Q^{uu}_t)^{-1} Q^u_t \to 0 \\
            & \implies (Q^{uu}_t)^{-1} Q^u_t = \mathbf{k}_t \to \mathbf{0} \\
            \psi \to 0 & \implies (V^\theta_1)^\top (V^{\theta\theta}_1)^{-1} V^\theta_1 \to 0 \\
            & \implies (V^{\theta\theta}_1)^{-1} V^\theta_1 = \mathbf{m} \to \mathbf{0}
        .\end{split}
    \end{equation*}
    Recall $\delta \btheta = \epsilon \mathbf{m}$, so $\delta \btheta \to \mathbf{0}$ and $\btheta^{(k)} \to \btheta^*$ as $k \to \infty$. Using induction, it is proven that for all $t = 1, \ldots, T$,
    \begin{equation} \label{eq/u_x_updates_converge}
        \begin{split}
            \delta \mathbf{u}_t & \to \mathbf{0} \\
            \delta \mathbf{x}_{t + 1} & \to \mathbf{0}
        ,\end{split}
    \end{equation}
    as $k \to \infty$.

    At time $t = 1$, since the initial condition is fixed, $\delta \mathbf{x}_1 = \mathbf{0}$; thus
    \begin{equation*}
        \begin{split}
            \delta \mathbf{u}_1 & = \epsilon \mathbf{k}_1 + \mathbf{M}_1 \delta \btheta \to \mathbf{0} \\
            \delta \mathbf{x}_2 & = \mathbf{F}^x_1 \delta \mathbf{x}_1 + \mathbf{F}^u_1 \delta \mathbf{u}_1 + \mathbf{F}^\theta_1 \delta \btheta \\
            & \quad + O(\norm{\delta \mathbf{x}_1}_2^2 + \norm{\delta \mathbf{u}_1}_2^2 + \norm{\delta \btheta}_2^2) \to \mathbf{0}
        .\end{split}
    \end{equation*}
    Now, assume \cref{eq/u_x_updates_converge} holds for $t = i$, namely
    \begin{equation*}
        \begin{split}
            \delta \mathbf{u}_i & \to \mathbf{0} \\
            \delta \mathbf{x}_{i + 1} & \to \mathbf{0}
        .\end{split}
    \end{equation*}
    Then, for time $t = i + 1$,
    \begin{equation*}
        \begin{split}
            & \delta \mathbf{u}_{i + 1} = \epsilon \mathbf{k}_{i + 1} + \mathbf{K}_{i + 1} \delta \mathbf{x}_{i + 1} + \mathbf{M}_{i + 1} \delta \btheta \to \mathbf{0} \\
            & \begin{aligned}[b]
                \delta \mathbf{x}_{i + 2} = {}& \mathbf{F}^x_{i + 1} \delta \mathbf{x}_{i + 1} + \mathbf{F}^u_{i + 1} \delta \mathbf{u}_{i + 1} + \mathbf{F}^\theta_{i + 1} \delta \btheta \\
                & + O(\norm{\delta \mathbf{x}_{i + 1}}_2^2 + \norm{\delta \mathbf{u}_{i + 1}}_2^2 + \norm{\delta \btheta}_2^2) \to \mathbf{0}
            .\end{aligned}
        \end{split}
    \end{equation*}
    This proves \cref{eq/u_x_updates_converge} holds by induction, implying $\mathbf{U}^{(k)} \to \mathbf{U}^*$.
\end{proof}

\end{document}